\newtheorem{theorem}{Theorem}[section]
\newtheorem{definition}[theorem]{Definition}
\newtheorem{fact}[theorem]{Observation}
\newtheorem{lemma}[theorem]{Lemma}
\newtheorem{proposition}[theorem]{Proposition}
\newtheorem{corollary}[theorem]{Corollary}
\newtheorem{claim}{Claim}[theorem]
\newenvironment{claim-proof}%
{\begin{description}[noitemsep,topsep=0pt,leftmargin = 0.2cm, labelsep = 0.2cm]
\item \emph{Proof of Claim.}}{\hfill$\diamond$\end{description}}
\newtheorem{remark}[theorem]{Remark}
\DeclareMathOperator{\lca}{lca}
\DeclareMathOperator{\med}{med}
\DeclareMathOperator{\child}{child}
\newcommand{\M}{\ensuremath{\mathbb{M}}}
\newcommand{\Mmax}{\ensuremath{\mathbb{M}_{\max}}}
\newcommand{\Ms}{\ensuremath{\mathbb{M}_{\mathrm{str}}}}
\newcommand{\Hext}{\ensuremath{H^{\mathrm{ext}}}}
\newcommand{\Qext}{\ensuremath{Q^{\mathrm{ext}}}}
\newcommand{\Xirr}{\ensuremath{X^{2}_\mathrm{irr}}}
\newcommand{\Lirr}{\ensuremath{L^{2}_\mathrm{irr}}}
\providecommand{\keywords}[1]{\textbf{\textit{Keywords: }} #1}
\title{From Modular Decomposition Trees to 
			 Rooted Median Graphs}
 \author[1]{Carmen Bruckmann} 
\author[2-7]{Peter F. Stadler}    
\author[8,*]{Marc Hellmuth} 
\affil[1]{Helmholtz-Centre for Environmental Research, Permoserstraße
  15, D-04318 Leipzig, Germany}
\affil[2]{Bioinformatics Group, Department of Computer Science \&
    Interdisciplinary Center for Bioinformatics, Universit{\"a}t Leipzig,
    H{\"a}rtelstra{\ss}e~16--18, D-04107 Leipzig, Germany.}
\affil[3]{German Centre for Integrative Biodiversity Research
  (iDiv) Halle-Jena-Leipzig, Competence Center for Scalable Data Services
  and Solutions Dresden-Leipzig, Leipzig Research Center for Civilization
  Diseases, and Centre for Biotechnology and Biomedicine at Leipzig
  University at Universit{\"a}t Leipzig}
\affil[4]{Max Planck Institute for Mathematics in the Sciences,
  Inselstra{\ss}e 22, D-04103 Leipzig, Germany} 
\affil[5]{Institute for Theoretical Chemistry, University of Vienna,
  W{\"a}hringerstrasse 17, A-1090 Wien, Austria}
\affil[6]{Facultad de Ciencias, Universidad National de Colombia, Sede
  Bogot{\'a}, Colombia}
\affil[7]{Santa Fe Insitute, 1399 Hyde Park Rd., Santa Fe NM 87501,
  USA}
\affil[8]{Department of Mathematics, Faculty of Science,
  Stockholm University, SE-10691 Stockholm, Sweden 
  \newline \texttt{mhellmuth@mailbox.org}}
\affil[*]{corresponding author}
\date{\ }
\begin{document}
\sloppy

\maketitle

\abstract{ 
  The modular decomposition of a symmetric map
    $\delta\colon X\times X \to \Upsilon$ (or, equivalently, a set of
    symmetric binary relations, a 2-structure, or an edge-colored
    undirected graph) is a natural construction to capture key features of
    $\delta$ in labeled trees. A map $\delta$ is explained by a
    vertex-labeled rooted tree $(T,t)$ if the label $\delta(x,y)$ 
    coincides with the label of the last common ancestor of $x$ and $y$ in
    $T$, i.e., if $\delta(x,y)=t(\lca(x,y))$. Only maps whose modular
    decomposition does not contain prime nodes, i.e., the symbolic
    ultrametrics, can be exaplained in this manner. Here we consider rooted
    median graphs as a generalization to  (modular decomposition) trees to
    explain symmetric maps.  We first show that every symmetric map can be
    explained by ``extended'' hypercubes and half-grids. We then derive a a
    linear-time algorithm that stepwisely resolves prime vertices in the
    modular decomposition tree to obtain a rooted and labeled median graph
    that explains a given symmetric map $\delta$. We argue that the
    resulting ``tree-like'' median graphs may be of use in phylogenetics as
    a model of evolutionary relationships.
}

\keywords{  2-structures; symbolic ultrametrics; modular decomposition;
  prime module; prime vertex replacement; median graph;
  algorithm; half-grid; hypercube
}

%
\section{Introduction}

The decomposition of an object comprising a finite point set $X$ into its
modules has been a topic of intense research for decades, starting with
Tibor Gallai's seminal work \cite{Gallai:67}. A module is a subset
$M\subseteq X$ such that the points within $M$ cannot be distinguished from
each other in terms of their relationships with points in $X\setminus M$.
Modular decompositions have been studied for graphs
\cite{CP-06,Corneil:81,EHMS:94,McConnell:95,habib2010survey,HFWS:20},
labeling maps or equivalently sets of binary relations and 2-structures
\cite{Boecker:98,HW:16a,HW:15,HSW:16,ER1:90,ER2:90,EHR:94,EHPR:96,EHRENFEUCHT1990343,Ehrenfeucht1995,ehrenfeucht1999theory,EHMS:94}
or sets of $n$-ary relations and, equivalently, hypergraphs
\cite{bonizzoni1999algorithm,bonizzoni1995modular,boussairi20183,habib2019general}. They
share fundamental properties irrespective of the type of the objects that
determines the nature of the pertinent relationships. In particular, the
strong modules, i.e., the modules that do not with overlap with other
modules, form a hierarchy and thus can be identified with the vertices of a
rooted tree $T$. This \emph{modular decomposition tree}, whose leaves
correspond to the point set $X$, captures a wealth of information on the
object under consideration.

Models of evolutionary relationships often start from a rooted tree $T$
endowed with labels of vertices or edges that designate evolutionary
events.  A broad class of inverse problems thus arises in mathematical
phylogenetics that can be phrased as follows: Given a (rooted) tree $T$
with labels on vertices and edges and a rule to derive a map $\delta$ on
pairs (or $k$-tuples) of leaves, one asks (i) which maps $\delta$ can be
\emph{explained} by such a labeled tree, (ii) how an explaining labeled
tree $T$ can be constructed, and (iii) to characterize the set of labeled
trees $T$ that explain a given map $\delta$. Not surprisingly, there is a
close connection to the modular decomposition.

An ``event labeling'' $t$ at the inner vertices of $T$, naturally defines
$\delta(x,y):=t(\lca_T(x,y))$ as the label of the last common ancestor of
two leaves $x$ and $y$. An event-labeled tree $(T,t)$ explaining $\delta$
in this manner exists if and only if $\delta$ is a symbolic ultrametric
\cite{Boecker:98}, which shares the structure of a cograph
\cite{Corneil:81}. The corresponding co-tree or symbolic discriminating
representation, i.e., the modular decomposition tree of $\delta$, is
obtained by contracting edges in $(T,t)$ whose endpoints share the same
label \cite{Hellmuth:13a}. In phylogenetic applications, symbolic
ultrametrics describe the key concepts of orthology and paralogy, i.e., the
question whether a pair of related genes descends from a speciation or a
gene duplication event \cite{Hellmuth:13a,Hellmuth:15a,HW:16book}. Edge
weights may model distances or specific types of evolutionary events. In a
\emph{pairwise compatibility graph} (PCG) an edge is drawn whenever the sum
of edge weights between two leaves lies within a specified interval
\cite{Kearney:03,PCGsurvey}. In phylogenetics, they model e.g.\ rare events
\cite{Hellmuth:18b}. Horizontal gene transfer is captured by Fitch graphs
\cite{Geiss:17a,HSS:20,Hel-18,HS:18}, a subclass of directed cographs
corresponding to pairs $(x,y)$ such that an edge with non-zero weight
appears along the path in $T$ connecting $\lca(x,y)$ with the leaf
$y$. Interpreting edge-weights as distances, and thus $\delta(x,y)$ as
distance between leaves, leads to the key theorem of mathematical
phylogenetics: There is a unique edge-weighted tree $T$ if and only if
$\delta$ satisfies the so-called 4-point condition
\cite{SimoesPereira:69,Buneman:71}.

Phylogenetics also motivates the investigation of generalizations. While
trees are an excellent model of many evolutionary systems, they are
approximations and sometimes networks are a better model of reality
\cite{Huson:11}. In the case of distance-based phylogenetics, this
naturally connects with theory of split-decomposable metrics
\cite{Bandelt:92} and their natural representations, the Buneman graphs
\cite{Dress:97,Huber:02}. The latter form a subclass of Median
graphs. Median networks, furthermore, play an important role as
representations of phylogenies within populations
\cite{Bandelt:95,Bandelt:00}. This suggests to consider median graphs, or
subclasses of median graphs such as the Buneman graphs, as a natural
generalization of trees in the context of phylogenetic questions.

In this contribution, we consider rooted median graphs, i.e., median graphs
with a vertex designated as the root. We note in passing that rooted median
graphs recently have attracted in the context of the Daisy graph
construction \cite{Dravec:20,Taranenko:20}. It is natural then to replace
the last common ancestor $\lca(x,y)$ by the median $\med(x,y,\rho)$ of two
vertices $x$ and $y$ and the root $\rho$ and to ask which maps can be
explained as $\delta(x,y)=t(\med(x,y,\rho))$, where $t$ is now a vertex
labeling on the median graph. In Sect.~\ref{sect:general} we show that
every symmetric map can be explained by a sufficiently large median graph
and provide explicit constructions for extended hypercubes and extended
half-grids. Thm.~\ref{thm:median-explains} shows that $O(|X|^2)$ vertices
are sufficient.  In the second part of this contribution,
Sect.~\ref{sect:MDT}, we show that for every symmetric map $\delta$ a
rooted labeled median graph can be obtained by expanding the map's modular
decomposition tree by replacing its vertices with explicitly constructed
graphs (Thm.~\ref{thm:pvr}).  This yields a practical algorithm to
construct a rooted median graph that explains $\delta$ with a running time
linear in the size of the input.  It reduces to a labeling of the modular
decomposition tree exactly for symbolic ultrametrics
(Thm.~\ref{thm:symbolic}).

\section{Preliminaries}

\paragraph{Sets and Maps}
Let $X$ be a finite set. We write
$\Xirr \coloneqq X\times X\setminus \{(x,x)\mid x\in X\}$ for the Cartesian
set product without reflexive elements, $\binom{X}{k}$ for the set of all
$k$-element subsets of $X$, and $2^X$ for the powerset of $X$.  Denoting by
$|X|$ the cardinality of $X$ we have $|\Xirr|=|X|(|X|-1)$.

A \emph{hierarchy on $X$} is a subset $\mathcal{H}\subseteq 2^X$ such that
(i) $X\in\mathcal{H}$, (ii) $\{x\}\in\mathcal{H}$ for all $x\in X$, and
(iii) $p\cap q\in \{p,q,\emptyset\}$ for all $p,q\in\mathcal{H}$.
Condition (iii) states that no two members of $\mathcal{H}$ overlap.

Let $X$ and $\Upsilon$ be
non-empty sets. We consider \emph{maps} $\delta\colon \Xirr \to \Upsilon$
that assign to each pair $(x,y)\in \Xirr$ the unique label
$\delta(x,y) \in \Upsilon$.  A map $\delta$ is \emph{symmetric} if
$\delta(x,y) = \delta(y,x)$ for all distinct $x,y\in X$.  For a subset
$L\subseteq X$ we denote with $\delta_{|L} \colon \Lirr \to \Upsilon$ the
map obtained from $\delta$ by putting $\delta_{|L}(x,y) = \delta(x,y)$ for
all distinct $x,y\in L$.

\paragraph{Graphs}
All graphs $G=(V,E)$ considered here are undirected and simple.  For a
subset $W\subseteq V$, we write $G-W$ for the graph obtained from $G$ by
deleting all vertices in $W$ and their incident edges.

All paths in $G$ are considered to be simple, that is, no vertex is
traversed twice. In particular, the graph $P_n$ denotes the path on $n$
vertices with vertex set $V(P_n) = \{1,\dots,n\}$ and edge set
$E(P_n) = \{ \{i,i+1\} \mid 1\leq i<n\}$. We also write $P_G(a,b)$ for a
path connecting two vertices $a$ and $b$ in $G$.  A cycle is a graph for
which the removal of any edge results in a path.  A cycle of length four is
called a \emph{square}.

The distance $d_G(u,v)$ between vertices $u$ and $v$ in a graph $G$ is the
length $|E(P)|$ of a shortest path $P$ connecting $u$ and $v$. The
\emph{interval} between $x$ and $y$ is the set $I_G(x,y)$ of vertices that
lie on shortest paths $P_G(x,y)$ between $x$ and $y$.

Let $G=(V,E)$ be a graph and $W\subseteq V$. A (partial vertex) labeling is
a map $t\colon W \to \Upsilon$ that assigns to every vertex $v\in W$ one
label $t(v)\in\Upsilon$. We write $(G,t)$ for a given graph $G$ together
with labeling $t$ and call $(G,t)$ \emph{labeled} graph.

\paragraph{Rooted Graphs and Trees}
We consider here \emph{rooted} graphs, that is, graphs for which there is a
particular distinguished vertex $\rho_G\in V$, called the \emph{root of
  $G$}.  Given a rooted graph $G$, we can equip the vertex set $V$ with a
partial order $\preceq_G$ by putting $u\preceq_G v$ whenever $v$ lies on
some path $P_G(\rho_G,u)$ connecting $\rho_G$ and $u$. If $u\preceq_G v$
and $u\neq v$, then we write $u\prec_G v$.  Furthermore, if we have an edge
$\{u,v\}\in E(G)$ such that $u\prec_T v$, then $u$ is a \emph{child} of $v$
and $\child_G(v)$ denotes the set of all children of $v$ in $G$. If
$u\preceq_G v$ or $v\preceq_G u$, the vertices $u$ and $v$ are
\emph{comparable} (in $G$) and \emph{incomparable}, otherwise.

A vertex $v\ne\rho_G$ in a graph $G$ is called \emph{leaf} if its degree
$\deg_G(v) = 1$. The \emph{inner} vertices in a graph $G$ are vertices that
are not leaves and $V^0(G)$ denotes the set of all inner vertices of $G$.
Let $G=(V,E)$ be a graph and assume that we have added the vertex
$x\notin V$ to $G$ such that $x$ is adjacent to exactly one vertex
$v\in V$.  Then, we say that $x$ is \emph{leaf-appended to $v$ (in $G$)}.

A \emph{tree} is a connected acyclic graph. Given a rooted tree $T$, the
last common ancestor $\lca_T(x,y)$ of two vertices $x,y\in V(T)$ is the
unique $\preceq_T$-minimal vertex that satisfies
$x,y\preceq_T \lca_T(x,y)$, that is, there is no further vertex $v$ with
$x,y\preceq_T v\prec_T \lca_T(x,y)$.  For rooted trees $T$ with leaf set
$L$ we denote with $L(v)$ the subset of leaves $x\in L$ with
$x\preceq_T v$. We also write $L_T(v)$ instead of $L(v)$, if there is a
risk of confusion.  Two labeled trees $(T,t)$ and $(T',t')$ with labeling
$t\colon V^0(T)\to \Upsilon$ and $t'\colon V^0(T')\to \Upsilon$ are
\emph{isomorphic} if $T$ and $T'$ are isomorphic via a map
$\psi:V(T)\to V(T')$ such that $t'(\psi(v))= t(v)$ holds for all
$v\in V^0(T)$. There is a well-known bijection between hierarchies and
rooted trees \cite{Semple:03}:
\begin{proposition}
  Let $\mathcal{H}$ be a set of non-empty subsets of $L$. Then,
  there is a rooted tree $T=(W,E)$ with leaf set $L$ and with
  $\mathcal{H} = \{L(v)\mid v\in W\}$ if and only if $\mathcal{H}$ is a
  hierarchy on $L$. Moreover, if there is such a rooted tree, then, up to
  isomorphism, $T$ is unique. 
\label{A:prop:hierarchy}
\end{proposition}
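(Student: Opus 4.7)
My plan is to prove both directions of the equivalence by explicit construction and then argue uniqueness from the canonical correspondence between vertices and their leaf-sets.

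For the forward direction, I would assume a rooted tree $T=(W,E)$ with leaf set $L$ and set $\mathcal{H}=\{L(v)\mid v\in W\}$. Condition (i) follows since $L=L(\rho_T)\in\mathcal{H}$; condition (ii) holds because every leaf $x\in L$ contributes $L(x)=\{x\}$; for condition (iii) I would pick $u,v\in W$ and split into cases. If $u$ and $v$ are $\preceq_T$-comparable, then $L(u)\subseteq L(v)$ or $L(v)\subseteq L(u)$, giving $L(u)\cap L(v)\in\{L(u),L(v)\}$. If they are incomparable, I would show $L(u)\cap L(v)=\emptyset$ by arguing that a common leaf $x\in L(u)\cap L(v)$ would force both $u$ and $v$ to lie on the unique path from $\rho_T$ to $x$, making them comparable, a contradiction.

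For the converse, I would construct $T$ directly from $\mathcal{H}$. Take the vertex set to be $W\coloneqq \mathcal{H}$, declare $L$ to be the root, and draw an edge $\{p,q\}$ whenever $p\subsetneq q$ and there is no $r\in\mathcal{H}$ with $p\subsetneq r\subsetneq q$ (so $p$ is a maximal proper $\mathcal{H}$-subset of $q$). The non-overlap condition (iii) is the crux: it forces every $p\in\mathcal{H}\setminus\{L\}$ to be contained in a \emph{unique} minimal $q\in\mathcal{H}$ with $p\subsetneq q$, because any two such $\mathcal{H}$-supersets of $p$ necessarily intersect (they share $p$) and so by (iii) are themselves comparable. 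This gives every non-root vertex a unique parent, so the resulting graph is a rooted tree in which $\preceq_T$ coincides with $\subseteq$. Using the singletons from (ii) as leaves, a straightforward induction on $|q|$ shows that $L_T(q)=q$ for every $q\in\mathcal{H}$, which recovers the identity $\mathcal{H}=\{L_T(v)\mid v\in W\}$.

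For uniqueness up to isomorphism, I would argue that any rooted tree $T'$ satisfying $\mathcal{H}=\{L_{T'}(v)\mid v\in W'\}$ must realize the map $v\mapsto L_{T'}(v)$ as a bijection (distinct vertices have distinct leaf-sets, since otherwise one could contract a degree-1 inner vertex and reduce to fewer nodes). Composing with the canonical labeling from the constructed tree produces an isomorphism $\psi\colon V(T)\to V(T')$; the ancestor relation on both sides translates to $\subseteq$ on the leaf-sets, so $\psi$ preserves edges, which are precisely the covering pairs in $(\mathcal{H},\subseteq)$.

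The main obstacle is the uniqueness-of-parent claim in the backward direction: one has to exploit condition (iii) carefully to rule out the configuration in which $p$ sits below two distinct $\subseteq$-minimal supersets. Once that is settled, everything else is a direct verification or a short induction.
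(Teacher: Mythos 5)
The paper itself offers no proof of Prop.~\ref{A:prop:hierarchy}; it is quoted as a well-known fact from \cite{Semple:03}. Your equivalence argument is the standard one and is sound: in the forward direction the comparable/incomparable case split correctly yields non-overlap, and in the backward direction the construction via cover pairs of $(\mathcal{H},\subseteq)$, with uniqueness of parent deduced from condition~(iii), is exactly the canonical tree-from-hierarchy construction; the induction $L_T(q)=q$ goes through because condition~(ii) guarantees that the union of the children of $q$ is all of $q$.

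The genuine gap is in the uniqueness part. You assert that $v\mapsto L_{T'}(v)$ must be a bijection, but the parenthetical justification (``one could contract a degree-1 inner vertex and reduce to fewer nodes'') is a non sequitur: the possibility of contracting does not preclude the uncontracted tree from also satisfying $\mathcal{H}=\{L_{T'}(v)\mid v\in W'\}$. Concretely, if an inner vertex $v$ of $T'$ has a single child $u$, then $L_{T'}(v)=L_{T'}(u)$, so the map is not injective, yet the \emph{set} of leaf-sets can still equal $\mathcal{H}$. For example, with $L=\{a,b\}$ and $\mathcal{H}=\{\{a\},\{b\},\{a,b\}\}$, both the cherry on $\{a,b\}$ and the tree obtained from it by subdividing the root (inserting a new root above it) realize $\mathcal{H}$, but they are not isomorphic. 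Uniqueness therefore requires the standard additional convention, implicit in \cite{Semple:03} and in this paper's application to modular decomposition trees, that every inner vertex of the rooted tree has at least two children. Under that convention single-child vertices are excluded, the map $v\mapsto L_{T'}(v)$ is indeed injective, and your observation that ancestry corresponds to $\subseteq$ (hence edges correspond to cover pairs in $(\mathcal{H},\subseteq)$) does finish the proof. You should state this hypothesis explicitly instead of relying on the contraction remark.
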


\begin{remark}
  Instead of graphs with leaves we could consider a straightforward
  generalization of the notion of $\mathscr{X}$-trees frequently employed
  in mathematical phylogenetics. There a set of taxa $\mathscr{X}$ is mapped
  (not necessarily injectively) to the vertex set $V(T)$ of a rooted or
  unrooted tree $T$, see e.g.\ \cite{Semple:03}. We prefer to instead
  identify the taxa with the leaf set $L$ and insist that distinct taxa are
  represented by distinct vertices in the graphs that describe the
  phylogenetic relationships. In an $\mathscr{X}$-tree like setting we
  could identify the taxa with the corresponding leaf's (uniquely defined)
  ``parent''.
\end{remark}

\paragraph{Cartesian Graph Product}

The \emph{Cartesian product} $G\Box H$ of two graphs $G=(V,E)$ and
$H=(W,F)$ has vertex set $V(G\Box H)\coloneqq V\times W$ and edge set
$E(G\Box H)\coloneqq \{\{(g,h),(g',h')\} \mid g=g' \text{ and } \{h,h'\}\in
F\text{, or }h=h' \text{ and } \{g,g'\}\in E\}$. The Cartesian product is
known to be commutative and associative and thus,
$\Box_{i=1}^n G_i = G_1\Box \cdots \Box G_n$ is well-defined
\cite{Hammack:2011a,IKR:08}. For a vertex
$v = (g_1,\dots,g_h)\in V(\Box_{i=1}^n G_i)$ we refer to $(g_1,\dots,g_n)$
as the \emph{coordinate vector} of $v$ and to $g_i$ as the \emph{$i$-th
  coordinate} of $v$. The \emph{Hamming distance} between two vertices $v$
and $w$ with coordinate vectors $(g_1,\dots,g_n)$ and $(g'_1,\dots,g'_n)$,
respectively, is the number of coordinates $i$ for which $g_i \neq g'_i$.

A \emph{complete grid} is the Cartesian product $P_n\Box P_m$ of two paths
and a \emph{grid graph} is a subgraph of a complete grid.  A
\emph{hypercube $Q_n$} is the $n$-fold Cartesian product of edges, i.e.,
$Q_n = \Box_{i=1}^n K_2$.  Equivalently, hypercubes can be defined as
graphs having vertex set $V = \{0,1\}^n$ and having edges precisely between
the vertices that have Hamming distance $1$.

Based on the \emph{distance formula} \cite[Cor.\ 5.2]{Hammack:2011a}, we
obtain the following simple result for complete complete grid graphs and
hypercubes that we shall need for later reference.
\begin{lemma}
  Let $P_n$ be a path with $V(P_n) = \{1,\dots,n\}$ and edge set
  $E(P_n) = \{\{i,j\} \mid j=i+1, 1\leq i<n\}$ and $G=P_n\Box P_n$.Then,
  for all vertices $(i,j), (i',j')\in V(G)$ it holds that
  $d_{G}((i,j), (i',j')) = |i-i'|+|j-j'|$.
				
  For a hypercube $Q_n$ the distance $d_{Q_n}(x,y)$ of vertices
  $x,y\in V(Q_n)$ is the Hamming distance of $x$ and $y$.
\label{lem:propertyCart}
\end{lemma}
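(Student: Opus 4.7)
The plan is to apply the cited distance formula for Cartesian products directly. That formula tells us that for any two graphs $G_1$ and $G_2$ and any vertices $(u_1,u_2),(v_1,v_2)\in V(G_1\Box G_2)$ one has
\[
d_{G_1\Box G_2}\bigl((u_1,u_2),(v_1,v_2)\bigr)=d_{G_1}(u_1,v_1)+d_{G_2}(u_2,v_2).
\]
By associativity of $\Box$, an easy induction extends this to $n$-fold products: $d_{\Box_{i=1}^n G_i}(u,v)=\sum_{i=1}^n d_{G_i}(u_i,v_i)$, where $u_i,v_i$ are the $i$-th coordinates of $u$ and $v$.

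For the grid statement, I would first observe that the distance in the path $P_n$ with vertex set $\{1,\dots,n\}$ and consecutive edges is $d_{P_n}(i,i')=|i-i'|$: the sequence $i,i\pm 1,\dots,i'$ is a path of length $|i-i'|$, and no shorter walk is possible because every edge changes the coordinate by exactly $1$. Plugging $G_1=G_2=P_n$ into the distance formula then immediately gives $d_{P_n\Box P_n}((i,j),(i',j'))=|i-i'|+|j-j'|$.

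For the hypercube statement, I would apply the $n$-fold version of the distance formula to $Q_n=\Box_{i=1}^n K_2$. Since $d_{K_2}(a,b)$ equals $0$ when $a=b$ and $1$ when $a\neq b$, the sum $\sum_{i=1}^n d_{K_2}(x_i,y_i)$ counts precisely the number of coordinates in which $x$ and $y$ differ, which is by definition the Hamming distance.

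There is no substantive obstacle here; the lemma is a direct corollary of the Cartesian-product distance formula together with the two elementary facts that $d_{P_n}(i,i')=|i-i'|$ and $d_{K_2}$ is the indicator of inequality. The only thing worth doing carefully is the induction extending the two-factor distance formula to the $n$-factor product used for $Q_n$, but this is routine and standard.
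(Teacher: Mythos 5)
Your proposal is correct and matches the paper's approach: the paper simply cites the Cartesian-product distance formula (Cor.~5.2 of Hammack et al.) and treats the lemma as an immediate consequence, exactly as you do. The only difference is that you spell out the elementary facts $d_{P_n}(i,i')=|i-i'|$ and $d_{K_2}(a,b)=[a\neq b]$ and the routine induction to $n$ factors, which the paper leaves implicit.
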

Following \cite{KS:00}, we will consider grid graphs as plane graphs, that
is, we will assume that they are embedded in the plane in the natural way
-- as a subgraph of a complete grid. 

\paragraph{Median Graphs}

A vertex $x$ is a \emph{median} of a triple of vertices $u$, $v$ and $w$ if
$d(u,x)+d(x,v)=d(u,v)$, $d(v,x)+d(x,w)=d(v,w)$ and $d(u,x)+d(x,w)=d(u,w)$.
A connected graph $G$ is a \emph{median graph} if every triple of its
vertices has a unique median.  In other words, $G$ is a median graph if,
for all distinct $u,v,w\in V(G)$, there is a unique vertex that belongs to
shortest paths between each pair of $u, v$ and $w$. Equivalently, $G$ is a
median graph if $|I_G(u,v)\cap I_G(u,w)\cap I_G(v,w)| =1$ for every triple
$u$, $v$ and $w$ of its vertices \cite{Mulder:78, SM:99}.  We denote the
unique median of three vertices $u$, $v$ and $w$ in a median graph $G$ by
$\med_G(u,v,w)$.  A well-known example of median graphs are trees. In
particular, if we consider rooted trees $T$, then $\lca_T(x,y)$ lies on all
three paths between $x$ and $y$, between $\rho_T$ and $x$ as well as
between $\rho_T$ and $y$. Taking the latter two arguments together, we
obtain the following
\begin{fact}
  If $T$ is a rooted tree, then $\lca_T(x,y) = \med_T(x,y,\rho_T)$ for
  every $x,y \in V(T)$.
\end{fact}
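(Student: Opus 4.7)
The plan is to exploit the fact that trees are median graphs, so the triple $(x,y,\rho_T)$ has a unique median in $T$, and then to verify that $\lca_T(x,y)$ has all the defining distance properties required of $\med_T(x,y,\rho_T)$. By uniqueness, the two must coincide.

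Concretely, I would first recall that in a tree there is a unique path between any two vertices, and the notions of interval and shortest path coincide with this unique path. Writing $\ell \coloneqq \lca_T(x,y)$, I would then check the three defining equalities $d(u,\ell)+d(\ell,v)=d(u,v)$ for the pairs $\{u,v\}\in\{\{x,y\},\{x,\rho_T\},\{y,\rho_T\}\}$. For the pair $(x,y)$ this holds because, by definition of $\preceq_T$, both $x\preceq_T \ell$ and $y\preceq_T \ell$, so the unique $x$-$y$ path in $T$ decomposes as the concatenation of the paths $x\to\ell$ and $\ell\to y$, giving $d_T(x,\ell)+d_T(\ell,y)=d_T(x,y)$. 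For the pair $(x,\rho_T)$ I would use that $x\preceq_T \ell\preceq_T \rho_T$, so $\ell$ sits on the unique $\rho_T$-$x$ path, giving $d_T(\rho_T,\ell)+d_T(\ell,x)=d_T(\rho_T,x)$; the argument for $(y,\rho_T)$ is identical.

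Having shown that $\ell$ belongs to $I_T(x,y)\cap I_T(x,\rho_T)\cap I_T(y,\rho_T)$, I would finally invoke the characterization of median graphs stated just before the fact, namely $|I_G(u,v)\cap I_G(u,w)\cap I_G(v,w)|=1$ for every triple, together with the fact that every tree is a median graph. This forces $\ell=\med_T(x,y,\rho_T)$.

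I do not expect any genuine obstacle here; the only thing to be slightly careful about is the edge cases where $x=\rho_T$, $y=\rho_T$, or $x\preceq_T y$ (or vice versa), in which $\ell$ coincides with one of $x,y,\rho_T$. These collapse the three equalities to trivial identities of the form $0+d=d$, and the same reasoning goes through unchanged. Hence the whole argument reduces to a one-line invocation of the median uniqueness in trees, with the partial order $\preceq_T$ doing all the work of placing $\lca_T(x,y)$ on the three relevant paths.
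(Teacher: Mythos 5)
Your proposal is correct and mirrors the paper's (implicit) argument exactly: both note that $\lca_T(x,y)$ lies on the three pairwise paths among $x$, $y$, $\rho_T$, and then invoke the uniqueness of the median in a tree (a median graph). You simply spell out the distance identities that the paper leaves to the reader.
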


A further example of median graphs are particular grid graphs for which a
planar drawing is provided.  As a direct consequence of Lemma 6 together
with Theorem 7 in \cite{KS:00}, we obtain
\begin{theorem}[{\cite{KS:00}}]
  A connected grid graph $G$ is a median graph if and only if all inner
  faces of $G$ are squares.
  \label{thm:grid-median}
\end{theorem}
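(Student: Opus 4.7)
The plan is to establish the two implications of the equivalence separately, in the spirit of the argument of \cite{KS:00}.

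For the $(\Rightarrow)$ direction I argue by contrapositive: suppose some inner face $F$ of $G$ is not a unit square. In the natural plane embedding of $G$ as a subgraph of $P_n\Box P_m$, the boundary cycle of $F$ then has length at least $6$, and $F$ covers at least two unit cells of the ambient complete grid. I would select three vertices $u,v,w$ on the boundary of $F$ whose coordinate-wise median $m^{\ast}=(\med(u_1,v_1,w_1),\med(u_2,v_2,w_2))$ lies strictly in the interior of $F$. Since $m^{\ast}\notin V(G)$, every $G$-path between two of $u,v,w$ must detour along the boundary of $F$, and a short case analysis (depending on the relative positions of $u,v,w$ on the cycle) shows that either the intersection $I_G(u,v)\cap I_G(u,w)\cap I_G(v,w)$ is empty, or it contains at least two distinct boundary vertices; either conclusion contradicts the median axiom.

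For the harder $(\Leftarrow)$ direction, assume that every inner face of $G$ is a unit square. I would first establish a distance lemma: for all $x,y\in V(G)$,
\[
d_G(x,y) \;=\; |x_1-y_1|+|x_2-y_2|,
\]
so that $G$ inherits the Manhattan distance from the ambient complete grid. The proof of this lemma proceeds by induction on $|x_1-y_1|+|x_2-y_2|$: given $x\neq y$, one produces a neighbor $x'$ of $x$ in $G$ whose Manhattan distance to $y$ is one smaller, observing that the absence of the natural coordinate-decreasing edge at $x$ would force a non-square inner face. With this in hand, for any triple $u,v,w$ the coordinate-wise median $m^{\ast}$ is, by the same calculation as in Lemma~\ref{lem:propertyCart} applied to $P_n\Box P_m$, the unique median candidate. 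It remains to verify that $m^{\ast}\in V(G)$, which follows from an orthoconvexity argument based on the square-face hypothesis: any coordinate point lying inside the bounding box of $\{u,v,w\}$ but outside $V(G)$ would sit in a bounded planar region whose boundary is larger than a $4$-cycle, contradicting the assumption.

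The main obstacle lies in the distance lemma of the $(\Leftarrow)$ direction: the square-face condition is purely local and planar, yet it must be leveraged to yield a global geodesic statement. The inductive shortening step, and the closely related orthoconvexity of the region spanned by $V(G)$, is where the combinatorial work is concentrated. Once these are in place, the characterization of medians via coordinate-wise medians in the Cartesian product $P_n\Box P_m$ makes the rest of the argument essentially routine.
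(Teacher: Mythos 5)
The paper itself gives no proof of this statement; it is cited directly from \cite{KS:00} as a consequence of Lemma~6 and Theorem~7 there, so I can only compare your proposal against what the theorem actually requires.

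Your $(\Leftarrow)$ direction has a fatal gap: the distance lemma $d_G(x,y)=|x_1-y_1|+|x_2-y_2|$ is simply false for grid graphs with all-square inner faces. Recall that in this paper a grid graph is \emph{any} subgraph of a complete grid $P_n\Box P_m$, not necessarily an induced one. Take the path $G$ on the vertices $(1,1),(1,2),(2,2),(2,1)$ with edges $\{(1,1),(1,2)\}$, $\{(1,2),(2,2)\}$, $\{(2,2),(2,1)\}$ -- three sides of a unit square. This $G$ is a connected grid graph with no inner faces at all, so the square-face hypothesis holds vacuously, yet $d_G\bigl((1,1),(2,1)\bigr)=3$ while the Manhattan distance is $1$. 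The inductive shortening step you describe fails precisely here: $(1,1)$ has no neighbour that is Manhattan-closer to $(2,1)$, and the absence of the ``coordinate-decreasing'' edge $\{(1,1),(2,1)\}$ does \emph{not} force a non-square bounded face, because it creates no bounded face at all. The same example shows your subsequent median computation breaks too: the median of $(1,1),(2,2),(2,1)$ in this tree is $(2,2)$, not the coordinate-wise median $(2,1)$. Note the theorem itself still holds here ($G$ is a tree, hence median), so the defect lies in your mechanism, not in the statement.

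What goes wrong conceptually is that the square-face condition does not control distances globally; it only controls the bounded faces, and these say nothing about dangling trees, bridges, or concave indentations of the region spanned by the vertices. Any correct proof of $(\Leftarrow)$ has to route around this, e.g.\ by decomposing $G$ along cut edges and cut vertices (median-ness is preserved under gluing along a vertex, cf.\ Lemma~\ref{lem:leaf-append}) and then arguing for $2$-connected pieces via a convex/peripheral expansion or a $\Theta$-class argument, which is in fact what the grid-graph literature does. The $(\Rightarrow)$ direction of your sketch is pointed in the right direction but is also underspecified: you must justify that the boundary of a non-square inner face is a simple cycle of length at least $6$ (the face boundary need not be a simple cycle if it has bridges hanging into it), and the ``short case analysis'' conceals genuine work since the detours around the face can be long and the relevant intervals large. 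As written, neither direction is a complete proof, and the Manhattan-distance lemma in particular cannot be repaired without adding a substantial extra hypothesis on $G$.
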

If $e$ is an edge in a connected graph $G$ whose removal makes $G$
disconnected, then $G$ is a median graph if and only if both components of
$G-e$ (the graph obtained from $G$ by removing the edge $e$) are median
graphs \cite{KS:00}. Therefore, we obtain the following simple result that
we need for later reference.
\begin{lemma}
  Let $G_1=(V_1,E_1)$ and $G_2=(V_2,E_2)$ be two vertex-disjoint graphs and
  let $v\in V_1$ and $w\in V_2$.  Then,
  $G=(V_1\cup V_2, E_1\cup E_2\cup \{\{v,w\}\})$ is a median graph if and
  only if $G_1$ and $G_2$ are median graphs.  In particular, if $L$ is the
  set of leaves in $G$, then $G$ is median graph if and only if $G-L$ is
  median graph.
  \label{lem:leaf-append}
\end{lemma}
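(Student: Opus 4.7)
The first statement reduces immediately to the bridge fact quoted right before the lemma. My plan is to observe that since $V_1\cap V_2=\emptyset$, the unique edge between $V_1$ and $V_2$ in $G$ is $e=\{v,w\}$; consequently $e$ is a bridge, and $G-e$ has exactly two connected components, namely $G_1$ and $G_2$. Connectedness of $G$ itself follows because every median graph is connected, so $G_1$ and $G_2$ are each connected, and adding $e$ joins them. Invoking the cited result of \cite{KS:00} (a connected graph is median iff the two components obtained by deleting a bridge are median) then directly gives: $G$ is median $\iff$ $G_1$ and $G_2$ are median.

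For the ``in particular'' part I plan a straightforward induction on $|L|$. Let $x\in L$ with unique neighbor $p(x)$; because $p(x)\in V(G)\setminus L$ (otherwise the two vertices $x,p(x)$ would form a component of $G$, forcing the trivial case $G=K_2$ which is handled separately), removing $x$ preserves connectedness. Writing $G$ as the graph obtained from $G_2:=G-x$ and the single vertex graph $G_1:=(\{x\},\emptyset)$ by adding the edge $\{x,p(x)\}$, the first part of the lemma applies. Since the one-vertex graph is trivially median, we get $G$ median $\iff$ $G-x$ median. A key observation to make explicit is that every other $y\in L\setminus\{x\}$ remains a leaf in $G-x$, since its unique neighbor $p(y)\in V(G)\setminus L$ is untouched. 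Thus we may iterate, peeling off the original leaves one by one, and after $|L|$ steps obtain $G$ median $\iff$ $G-L$ median.

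The only genuine obstacle is the small-case bookkeeping: verifying that $p(x)\notin L$ (so that removing $x$ does not accidentally destroy connectedness or change the status of other vertices in $L$), and sanity-checking the trivial instances ($G$ a single vertex, or $G=K_2$ with one leaf). Everything else is a direct appeal to the bridge criterion from \cite{KS:00}, with no new structural argument required.
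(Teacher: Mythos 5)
Your proof is correct and follows the same line of reasoning the paper itself intends: the paper simply writes ``Therefore, we obtain the following simple result'' after quoting the bridge criterion from \cite{KS:00}, giving no further argument, and you fill in exactly the details that make that inference go through. In the first part you correctly observe that $\{v,w\}$ is a bridge whose removal yields components $G_1$ and $G_2$, with connectedness of $G$ supplied by connectedness of median graphs in each direction of the equivalence. For the ``in particular'' claim, the induction peeling off one leaf at a time, with the check that a leaf's unique neighbour is itself not a leaf (so that remaining leaves stay leaves and connectedness is preserved), is the natural and correct elaboration. The only small caveat, which the paper also leaves implicit, is that the ``in particular'' statement is really meant for connected $G$ (it fails, for instance, for a disjoint union of an edge and an isolated root), but in every application in the paper $G$ is connected, so this is a harmless omission on both sides.
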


\begin{definition}
  Let $G=(V,E)$ be a rooted median graph with root $\rho$ and a specified
  set $L\subseteq V$ of vertices.  Then,
  \begin{equation*}
    V_{\med_G,L} \coloneqq \{\med_G(\rho,x,y) \mid x,y\in L \text{ and }
    x,y,\rho \text{ pairwise distinct} \}
  \end{equation*}
  denotes the set of all (unique) medians $\med_G(\rho,x,y)$ for all
  distinct pairs of vertices $x,y\in L$.
  \label{def:VL}
\end{definition} 

\begin{definition}
  Let $\delta\colon \Xirr \to \Upsilon$ be a symmetric map. A rooted median
  graph $G = (V,E)$ with root $\rho$ and specified set $L = X\subseteq V$
  \emph{explains} $\delta$ if there is a labeling
  $t\colon V_{\med_G,L} \to \Upsilon$ such that
  $\delta(x,y)=t(\med_G(\rho,x,y))$ for all distinct $x,y\in X$.
  \label{def:explain}
\end{definition}

In the following the set $L$ in Def.\ \ref{def:VL} and \ref{def:explain}
will coincide the leaf set of the median graphs under consideration.

\section{Every map $\delta$ can be explained by a labeled median graph}
\label{sect:general}

We are interested in rooted median graphs that can explain a given map
$\delta$. As noted in the introduction, only a very restricted subclass of
maps, namely the symbolic ultrametrics \cite{Boecker:98}, can be explained
by rooted trees. This is not surprising since $O(|X|^2)$ values of
$\delta(x,y)$ must be explained by only $O(|X|)$ labels at the internal
vertices of a tree. This suggests that rooted median graphs with a large
enough number of inner vertices should be able to explain any given
map. Here we show that this is indeed the case.

\subsection{Extended Hypercubes}

One of the best known median graphs is possibly the hypercube. In
particular, every median graph is a distance-preserving subgraph of some
hypercube \cite{KS:00}.

\begin{definition}
  Let $Q_{n}=(V,E)$ be a hypercube.  An \emph{extended} hypercube
  $\Qext_{n}$ with leaf set $L=\{x_1,\dots,x_n\}$ is obtained from
  $Q_{n}$ as follows: every $x_i\in L$ is leaf-appended to the vertex for
  which only the $i$-th coordinate is a $1$.  The root $\rho$ of
  $\Qext_{n}$ will always be the vertex with coordinate vector
  $(1,\dots,1)$.
\end{definition}

\begin{lemma}	
  $\Qext_{n}$ is a median graph and
  $\med_{\Qext_{n}}(\rho,x_i,x_j) \neq \med_{\Qext_{n}}(\rho,x_k,x_l)$
  whenever $x_i$ and $x_j$, resp.\, $x_k$ and $x_l$ are distinct vertices
  and $|\{i,j\} \cap \{k,l\}|\leq 1$.
  \label{lem:QndistinctMedians}
\end{lemma}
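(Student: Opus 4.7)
The plan is to first establish the median-graph property via the structural lemma already at our disposal, and then to compute the relevant medians explicitly.

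For the first claim, I would observe that deleting the leaf set $L=\{x_1,\dots,x_n\}$ from $\Qext_{n}$ yields the hypercube $Q_n$, which is a standard example of a median graph. Lemma \ref{lem:leaf-append} then applies directly: since $\Qext_{n}-L = Q_n$ is a median graph, so is $\Qext_{n}$.

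For the distinctness claim, the strategy is to compute $m := \med_{\Qext_{n}}(\rho,x_i,x_j)$ explicitly for $i\ne j$. Writing $e_k$ for the hypercube vertex having $1$ in position $k$ and $0$ elsewhere (so that $x_k$ is a leaf attached to $e_k$), the appended leaves act as bottlenecks: every path leaving $x_i$ must begin with the edge $\{x_i,e_i\}$, and analogously for $x_j$. Combining this observation with Lemma \ref{lem:propertyCart} gives $d_{\Qext_{n}}(x_i,\rho)=n$ and $d_{\Qext_{n}}(x_i,x_j)=4$. The median $m$ must lie in $I_{\Qext_{n}}(\rho,x_i)\cap I_{\Qext_{n}}(\rho,x_j)\cap I_{\Qext_{n}}(x_i,x_j)$, and clearly cannot be a leaf other than $x_i$ or $x_j$. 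A brief Hamming-distance check shows that a hypercube vertex $v$ lies on a shortest $\rho$–$x_i$ path if and only if $v_i=1$; symmetrically, $v$ lies on a shortest $\rho$–$x_j$ path iff $v_j=1$. The hypercube vertices interior to some shortest $x_i$–$x_j$ path are exactly $\{e_i,\,e_j,\,e_i+e_j,\,\mathbf{0}\}$, of which only $e_i+e_j$ satisfies both $v_i=1$ and $v_j=1$. Hence $m = e_i+e_j$.

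With this explicit description in hand, the distinctness assertion drops out immediately: when $|\{i,j\}\cap\{k,l\}|\leq 1$ we have $\{i,j\}\ne\{k,l\}$, so the binary vectors $e_i+e_j$ and $e_k+e_l$ disagree in at least one coordinate and the corresponding medians are therefore distinct vertices of $\Qext_{n}$. The only step that requires any care is the Hamming-distance bookkeeping pinning down ``$v$ on a shortest $\rho$–$x_i$ path $\iff v_i=1$''; everything else reduces to a single invocation of Lemma \ref{lem:leaf-append} and coordinate comparison.
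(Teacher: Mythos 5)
Your proof is correct and follows essentially the same route as the paper: both use Lemma~\ref{lem:leaf-append} for the median-graph claim, and both identify the candidate medians for the triple $\{\rho,x_i,x_j\}$ as $\mathbf{0}$ and $e_i+e_j$, ruling out $\mathbf{0}$ via its distance to $\rho$. The only difference is presentational: you compute $\med_{\Qext_n}(\rho,x_i,x_j)=e_i+e_j$ explicitly (making the distinctness claim immediate), whereas the paper supposes two such medians coincide and derives a contradiction at $\mathbf{0}$ — your direct computation is a slightly cleaner way to package the same underlying argument.
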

\begin{proof}
  Since $Q_{n}$ as well as all single vertex graphs
  $G_j=(\{x_j\},\emptyset)$, $1\leq j\leq n$ are median graphs, Lemma
  \ref{lem:leaf-append} implies that $\Qext_{n}$ is a median graph.
	
  Now, let $L$ be the set of leaves in $\Qext_{n}$ and let $x_i,x_j\in L$
  and $x_k,x_l\in L$ be two distinct vertices, respectively, such that
  $\{i,j\} \neq \{k,l\}$.  Since $\Qext_{n}$ is a median graph, the medians
  in $\Qext_{n}$ are unique for all three vertices in $\Qext_{n}$ and it is
  easy to verify that
  $\med_{Q_{n}}(\rho,v_i,v_j) = \med_{\Qext_{n}}(\rho,v_i,v_j)$ for every
  distinct $v_i, v_j \in V(Q_{n})$ with $v_i, v_j \ne \rho$.  By
  construction, $x_{\ell}\in L$ is leaf-appended to the unique vertex
  $v_{\ell}$ with all coordinates being $0$ except the $\ell$-th coordinate
  which is a $1$ and every shortest path from $x_{\ell}$ to any other
  vertex must contain $v_{\ell}$. The latter two arguments imply that it
  suffices to show that
  $\med_{Q_{n}}(\rho,v_i,v_j) \neq \med_{Q_n}(\rho,v_k,v_l)$ in order to
  show that
  $\med_{\Qext_{n}}(\rho,x_i,x_j) \neq \med_{\Qext_{n}}(\rho,x_k,x_l)$.

  Assume for contradiction that
  $w\coloneqq \med_{Q_{n}}(\rho,v_i,v_j) = \med_{Q_{n}}(\rho,v_k,v_l)$.
  Based on Lemma \ref{lem:propertyCart} and the respective Hamming
  distances we easily obtain,
  \begin{equation*}
    d_{Q_{n}}(v_i,w)+d_{Q_{n}}(w,v_j)= d_{Q_{n}}(v_i,v_j) = 2
    =d_{Q_{n}}(v_k,v_l) = d_{Q_{n}}(v_k,w)+d_{Q_{n}}(w,v_l).
  \end{equation*}
  There are two possibilities for $w$ to satisfy
  $d_{Q_{n}}(v_i,w)+d_{Q_{n}}(w,v_j)= d_{Q_{n}}(v_i,v_j) = 2$, that is, $w$
  either the vertex with coordinate vector $(0,\dots,0)$ or the vertex for
  which the coordinate vectors have precisely two $1$s, namely the $i$-th
  and $j$-th coordinate.  Analogously, for
  $d_{Q_{n}}(v_k,v_l) = d_{Q_{n}}(v_k,w)+d(w,v_l)=2$, the median $w$ is
  either the vertex $(0,\dots,0)$ or the vertex for which the coordinate
  vectors have precisely two $1$s, namely the $k$-th and $l$-th coordinate.
  This together with $\{i,j\} \neq \{k,l\}$ implies that $w$ must be the
  vertex $(0,\dots,0)$.  Since $w =\med_{Q_{n}}(\rho,v_i,v_j)$ it must also
  hold that $d_{Q_{n}}(v_j,w)+d_{Q_{n}}(w,\rho)=d_{Q_{n}}(v_j,\rho) =
  n-1$. However, since $d_{Q_{n}}(v_j,w) =1$ and $d_{Q_{n}}(w,\rho) = n$,
  we obtain $d_{Q_{n}}(v_j,w)+d_{Q_{n}}(w,\rho)=d_{Q_{n}}(v_j,\rho) = n+1$;
  a contradiction.
\end{proof}

\begin{corollary}
Every  symmetric map $\delta\colon \Xirr \to \Upsilon$
can be explained by a labeled extended hypercube $(\Qext_{|X|},t)$.
\label{cor:Qext-explains}
\end{corollary}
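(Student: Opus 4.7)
The plan is to set $n = |X|$, enumerate the elements of $X$ as $x_1,\dots,x_n$, identify them with the leaves of $\Qext_n$, and then transfer the values of $\delta$ to labels on the appropriate median vertices. Almost all of the work is already contained in Lemma \ref{lem:QndistinctMedians}; the corollary is essentially a repackaging.

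First I would invoke Lemma \ref{lem:QndistinctMedians} to conclude that $\Qext_n$ is a median graph, so the medians $\med_{\Qext_n}(\rho,x_i,x_j)$ are well-defined and unique for every choice of distinct leaves. The same lemma provides the crucial injectivity: for distinct leaves $x_i,x_j$ and distinct leaves $x_k,x_l$ with $\{i,j\}\neq\{k,l\}$ we have $|\{i,j\}\cap\{k,l\}|\leq 1$, and hence $\med_{\Qext_n}(\rho,x_i,x_j) \neq \med_{\Qext_n}(\rho,x_k,x_l)$. Thus the assignment $\{x_i,x_j\}\mapsto \med_{\Qext_n}(\rho,x_i,x_j)$ is an injection from $\binom{X}{2}$ into $V_{\med_{\Qext_n},X}$.

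Using this injectivity, I would define $t\colon V_{\med_{\Qext_n},X}\to \Upsilon$ by $t\bigl(\med_{\Qext_n}(\rho,x_i,x_j)\bigr) \coloneqq \delta(x_i,x_j)$. Well-definedness follows from the two facts just collected: injectivity guarantees that no vertex of $V_{\med_{\Qext_n},X}$ receives two competing values from different unordered pairs of leaves, and symmetry of $\delta$ ensures that the value does not depend on the order within a pair. By Definition \ref{def:VL}, $V_{\med_{\Qext_n},X}$ consists of exactly those vertices that arise as medians of pairs of leaves together with $\rho$, so the domain of $t$ is precisely what Definition \ref{def:explain} requires. With $t$ so defined, the identity $\delta(x,y)=t\bigl(\med_{\Qext_n}(\rho,x,y)\bigr)$ holds for all distinct $x,y\in X$ by construction, and therefore $(\Qext_n,t)$ explains $\delta$.

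There is no real obstacle; the only sliver of care is to align the book-keeping between Definitions \ref{def:VL} and \ref{def:explain} with the injectivity statement of Lemma \ref{lem:QndistinctMedians}, so that $t$ is manifestly a function.
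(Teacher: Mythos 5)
Your proof is correct and follows essentially the same route as the paper's: invoke Lemma~\ref{lem:QndistinctMedians} to get that $\Qext_{|X|}$ is a median graph with pairwise distinct medians $\med_{\Qext_{|X|}}(\rho,x_i,x_j)$, then define $t$ on those medians directly from $\delta$. You spell out the well-definedness (injectivity plus symmetry of $\delta$) a bit more explicitly than the paper does, but there is no substantive difference.
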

\begin{proof}
	Let $X$ be the set of leaves in $\Qext_{|X|}$. 
  Put $t(\med_{\Qext_{|X|}}(\rho,x,y)) = \delta(x,y)$ for all distinct
  $x,y\in X$.  By Lemma \ref{lem:QndistinctMedians}, we have
  $\med_{\Qext_{|X|}}(\rho,x_i,x_j) \neq \med_{\Qext_{|X|}}(\rho,x_k,x_l)$
  given that $\{i,j\} \neq \{k,l\}$ and $i\neq j$ and $k\neq l$.  Hence,
  $t(\med_{\Qext_{|X|}}(\rho,x,y))$ is well-defined and $(\Qext_{|X|} , t)$
  explains $\delta$.
\end{proof}

An illustrative example for Cor.\ \ref{cor:Qext-explains} is provided in
Fig.\ \ref{fig:exmpl}.  Of course, $\Qext_{|X|}$ has $O(2^{|X|})$ vertices
and many vertices are not part of $V_{\med_{\Qext_{|X|}}, X}$ which makes
this construction intractable in practice.  Hence, we focus now on graphs
that have significantly less vertices and still explain a symmetric map
$\delta$.

\subsection{Extended Halfgrids}

We next consider a class of much smaller median graphs.
\begin{definition} 
  A \emph{half-grid} $H_{n}$ with $n\geq 2$ is defined as
  $H_2=P_2 \Box P_2$ or, if $n\geq 3$, then $H_n$ is obtained from the
  Cartesian product $P_n \Box P_n$ by removing all vertices with coordinate
  vectors $(i,j)$ with $1\leq j\leq n-2$ and $j+2\leq i\leq n$.  and its
  incident edges, see also Fig.\ \ref{fig:halfG-exmpl-full}.
\end{definition}

\begin{fact}
  By construction, $H_{n}$ is a subgraph of $P_n \Box P_n$ that is induced
  by the vertices with coordinate vectors $(i,j)$ such that either $i=1$
  and $1\leq j\leq n$ or $2\leq i\leq n$ and $i-1\leq j\leq n$.

  Analogously, $H_{n}$ is a subgraph of $P_n \Box P_n$ that is induced by
  the vertices with coordinate vectors $(i,j)$ such that either $j=n$ and
  $1\leq i\leq n$ or $1\leq j\leq n-1$ and $1\leq i\leq j+1$.
  \label{obs:coord}
\end{fact}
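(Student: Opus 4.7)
The plan is to simply unpack the defining construction of $H_n$ and re-express the surviving vertex set in the two claimed ways. The set of vertices removed from $P_n\Box P_n$ is, by definition,
\[
R \coloneqq \{(i,j)\mid 1\le j\le n-2 \text{ and } j+2\le i\le n\},
\]
so $V(H_n)=\{(i,j)\mid 1\le i,j\le n\}\setminus R$. Hence a vertex $(i,j)$ of $P_n\Box P_n$ belongs to $H_n$ if and only if at least one of the defining inequalities of $R$ fails, i.e., iff $j\ge n-1$ or $i\le j+1$. For $n=2$ the removal set $R$ is empty (the range $1\le j\le 0$ is vacuous), so $H_2=P_2\Box P_2$, and both descriptions below trivially reduce to the full vertex set; so from here on I treat the generic case.

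To obtain the first description I split on $i$. For $i=1$ the inequality $i\le j+1$ is automatic for every $1\le j\le n$, giving the row $i=1,\ 1\le j\le n$. For $2\le i\le n$, the condition $i\le j+1$ is equivalent to $j\ge i-1$, and combined with $j\le n$ yields $i-1\le j\le n$; note that the extra disjunct $j\ge n-1$ contributes nothing new, since any such $(i,j)$ with $2\le i\le n$ already satisfies $j\ge n-1\ge i-1$. This produces precisely the first characterization.

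To obtain the second description I instead split on $j$. For $j=n$, the inequality $i\le j+1$ is automatic, so all $1\le i\le n$ survive. For $j=n-1$, the condition $i\le j+1=n$ is again automatic, so this row can be absorbed into the first one of the second description (where it corresponds to the $j=n-1$ slice of the range $1\le j\le n-1,\ 1\le i\le j+1$, with $i$ running up to $n$). For $1\le j\le n-2$, only $i\le j+1$ survives, i.e., $1\le i\le j+1$. Combining these pieces yields exactly the second formulation.

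The main (and only) obstacle is bookkeeping with the boundary cases $j=n-1$, $j=n$, and $n=2$, to make sure the two disjunctive descriptions give the same set as the direct description $V(P_n\Box P_n)\setminus R$. Once this is verified, both statements follow immediately, and the equivalence of the two characterizations is simply the observation that they enumerate the same set of pairs $(i,j)$.
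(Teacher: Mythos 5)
Your proof is correct and follows the only natural route: unpack the definition of the removal set $R$, note that $(i,j)$ survives iff $j\ge n-1$ or $i\le j+1$ (using $1\le i,j\le n$), and then split on $i$ for the first characterization and on $j$ for the second. The paper treats this as an observation stated "by construction" without an explicit proof, and your case bookkeeping, including the subsumption of the $j\ge n-1$ disjunct and the degenerate $n=2$ case, is exactly the verification the paper leaves implicit.
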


\begin{figure}[t]
  \begin{center}
    \includegraphics[width=0.85\textwidth]{./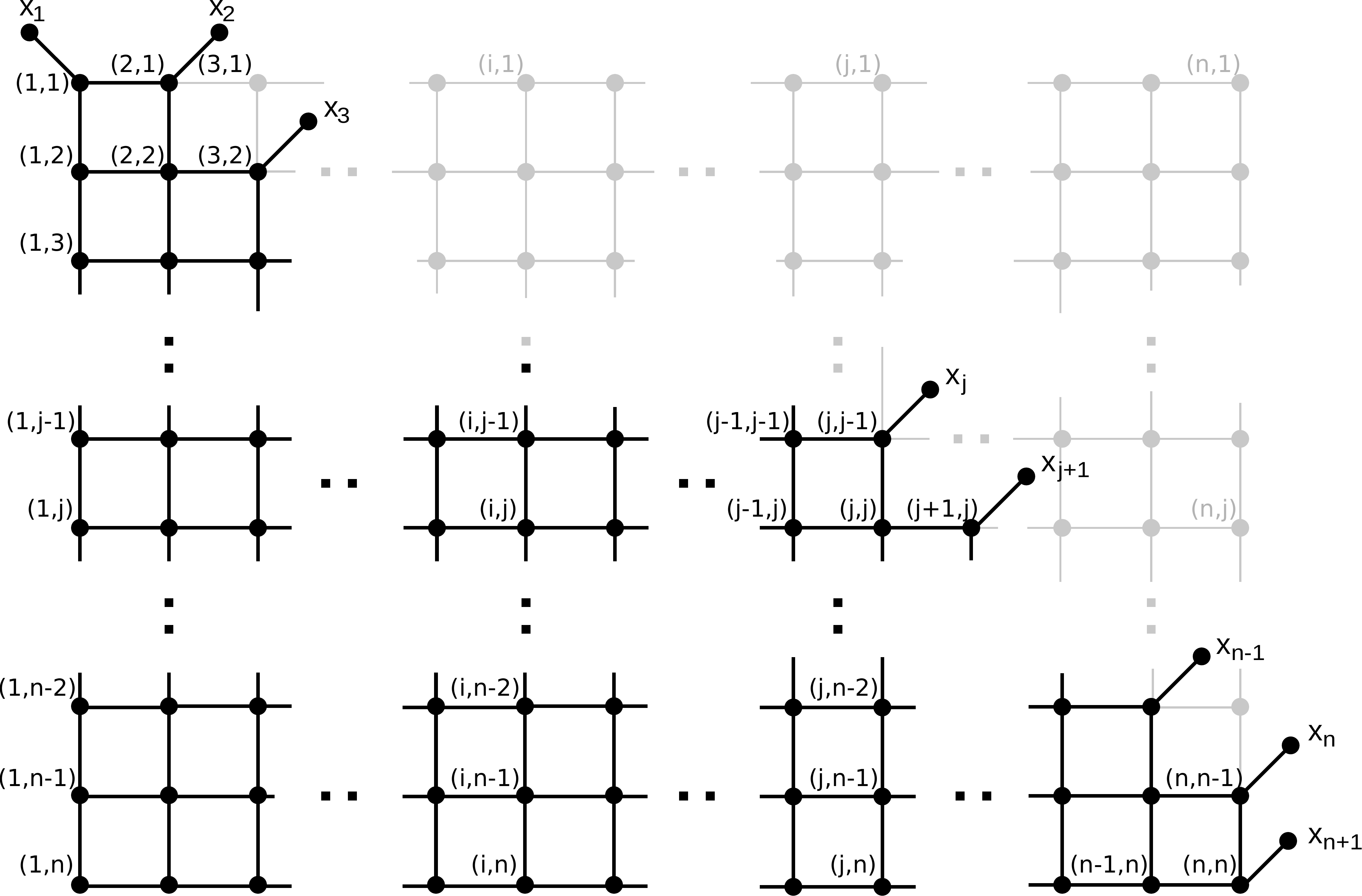}
  \end{center}
  \caption{Sketch of the extended half-grid $\Hext_{n+1}$ on
    $X=\{x_1,\dots,x_{n+1}\}$ that is obtained from $P_n\Box P_n$ by
    removal of all gray-colored edges and vertices.  For better
    readability, we have drawn only a few vertex coordinate vectors that
    always belong to the vertex at the lower-right of the respective
    coordinate vector.  }
  \label{fig:halfG-exmpl-full}
\end{figure}

\begin{figure}[t]
  \begin{center}
    \includegraphics[width=0.7\textwidth]{./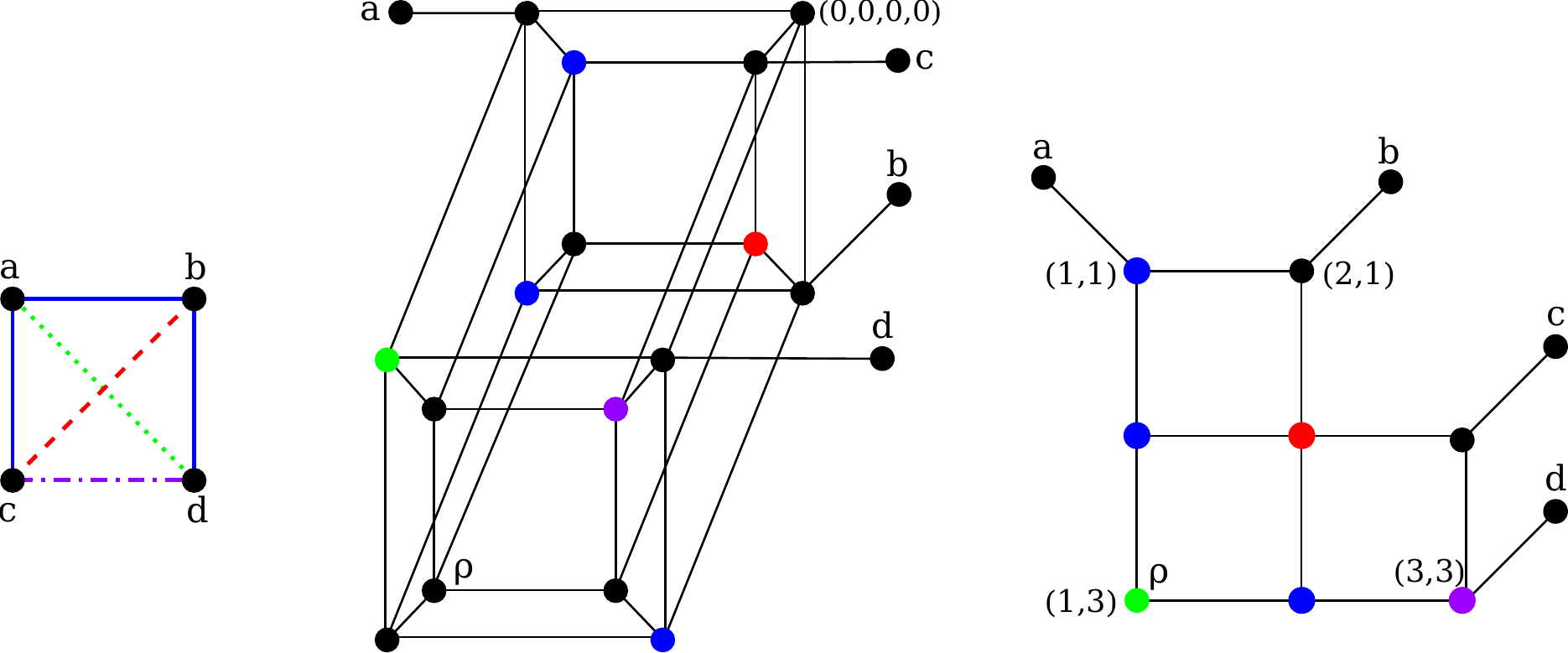}
  \end{center}
  \caption{Left: the graph-representation of a map
    $\delta\colon\Xirr\to\Upsilon$ with $X=\{a,b,c,d\}$ and
    $\delta(a,b)=\delta(a,c)=\delta(b,d)=\mathrm{blue}$ \emph{(solid line)},
    $\delta(b,c)=\mathrm{red}$ \emph{(dashed line)},
    $\delta(c,d)=\mathrm{purple}$ \emph{(dashed-dotted line)},
    $\delta(a,d)=\mathrm{green}$ \emph{(dotted line)}.  For the two graphs
    $(\Qext_{4},t)$ (middle) and $(\Hext_{4},t)$ (right) all non-black
    vertices belong to $V_{\med_{\Qext_{4}},X}$ and
    $V_{\med_{\Hext_{4}},X}$, respectively. Both labeled graphs
    $(\Qext_{4},t)$ and $(\Hext_{4},t)$ explain $\delta$.  }
  \label{fig:exmpl}
\end{figure}

\begin{lemma}
  $H_{n}$ is a median graph.
\label{lem:hg-median}
\end{lemma}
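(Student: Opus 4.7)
The plan is to reduce the claim to Theorem \ref{thm:grid-median}, which characterizes median grid graphs as those (connected) grid graphs whose inner faces are all squares. Since $H_n$ is defined as an induced subgraph of the complete grid $P_n\Box P_n$, it is a grid graph by construction, so it remains to verify that $H_n$ is connected and that in its natural planar embedding every inner face is a unit square.

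I would dispose of the base case $n=2$ immediately: $H_2=P_2\Box P_2$ is a $4$-cycle, trivially a median graph. For $n\geq 3$, I would use the coordinate description from Observation \ref{obs:coord}: the vertex set of $H_n$ consists of those $(i,j)$ with $j\geq i-1$. Connectivity is then straightforward: every vertex $(i,j)$ of $H_n$ can be joined to the corner $(1,n)$ by first moving along row $i$ to $(i,n)$, which stays in $H_n$ since $j\le n$ forces none of the intermediate coordinates to violate $j'\geq i-1$, and then along the column to $(1,n)$.

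The main work is to show that each bounded face of $H_n$ in its inherited plane embedding is a unit square. The clean way to do this is to observe that a unit square face of $P_n\Box P_n$ with corners $(i,j),(i+1,j),(i,j+1),(i+1,j+1)$ survives in $H_n$ exactly when all four corners are retained, i.e.\ when $j\geq i$, and that no new bounded face can arise from the deletion. For the latter point, the removed set $\{(i,j)\mid i\geq j+2\}$ is a ``staircase'' region attached to the outer boundary of $P_n\Box P_n$ along two sides; consequently the deletion merely enlarges the outer face and the remaining boundary is a single simple closed walk. Hence every bounded face of $H_n$ is one of the surviving unit squares. I expect that making this ``no new faces'' argument rigorous, without invoking heavy planar-graph machinery, is the main obstacle; a convenient route is to peel off the removed vertices one diagonal at a time (from $i-j=n-1$ down to $i-j=2$), noting at each step that the deleted vertex lies on the current outer face and has degree $\leq 2$, so its removal only shortens the outer boundary and destroys at most the unit squares incident to it.

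With connectivity established and all inner faces shown to be squares, Theorem \ref{thm:grid-median} directly yields that $H_n$ is a median graph.
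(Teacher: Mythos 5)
Your proof is correct and follows essentially the same route as the paper's: both reduce the claim to Theorem~\ref{thm:grid-median} and then establish that all inner faces of $H_n$ are squares by peeling vertices off $P_n\Box P_n$ one at a time, observing that each removal destroys one square and creates no new bounded face. The only cosmetic difference is the peeling order --- you remove the deleted vertices diagonal by diagonal (constant $i-j$), whereas the paper removes them column by column ($j=1,2,\dots,n-2$, and within each column from $(n,j)$ down to $(j+2,j)$); your version makes the ``degree $\leq 2$, lies on the outer face'' justification slightly more explicit than the paper does, but the underlying argument is identical.
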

\begin{proof}
  By construction $H_n$ is obtained from $G=P_n\Box P_n$ be removal of some
  vertices and its incident edges. All removed vertices are of the form
  $(i,j)$ with $1\leq j\leq n-2$ and $j+2\leq i\leq n$. We continue to show
  that each inner face of $H_n$ is a square.

  Consider a planar embedding of $P_n\Box P_n$ which can be chosen in a way
  such that all inner faces are squares.  Let us first remove vertex
  $v=(n,1)$.  This yields one square deletion and does not produce any new
  inner face in the graph $G' = G-\{v\}$. Now, we remove vertex $w=(n-1,1)$
  from $G'$ and again, one square is removed and no new inner face has been
  created. This step can repeated until we end in some graph where all
  vertices $(n,1),\dots,(3,1)$ have been removed. In this graph, all inner
  faces are still squares and thus, by Thm.\ \ref{thm:grid-median}, it is a
  median graph.  Now we proceed for all $j$ from $2$ to $n-2$ and delete
  stepwisely all vertices $(n,j),\dots, (j+2,j)$. In each step exactly one
  square is removed and no new inner face is created.

  Hence, $H_n$ is a connected planar grid graph where all inner faces are
  squares.  By Thm.\ \ref{thm:grid-median}, $H_n$ is a median graph.
\end{proof}

\begin{definition}
  Let $H_{n}=(V,E)$ be a half-grid, $n\geq 2$.  An \emph{extended}
  half-grid $\Hext_{n+1}$ with leaf set $L=\{x_1,\dots,x_{n+1}\}$ is
  obtained from $H_{n}$ as follows: Make $x_1$ leaf-appended to vertex
  $(1,1)$, $x_{n+1}$ leaf-appended to vertex $(n,n)$ and $x_j$
  leaf-appended to vertex $(j,j-1)$, $2\leq j\leq n$ in $H_{n}$.

  The root $\rho$ of $\Hext_{n+1}$ will always be the vertex with
  coordinate vector $(1,n)$.
\end{definition}

\begin{proposition}
  Let $\Hext_{n+1}$ be an extended half-grid with leaf set
  $L=\{x_1,\dots,x_{n+1}\}$. Then, $\Hext_{n+1}$ is a median graph and for
  all leaves $x_i$ and $x_j$, $i<j$ we have
  $\med_{\Hext_{n+1}}(\rho,x_i,x_j)=(i,j-1)$.
\label{prop:ext-hg-median}
\end{proposition}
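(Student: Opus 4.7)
The plan is to prove the two assertions separately, handling first that $\Hext_{n+1}$ is a median graph and then verifying the median formula by a direct distance computation.

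For the median-graph property, I would argue as follows. By Lemma~\ref{lem:hg-median}, $H_n$ is a median graph. By construction, $\Hext_{n+1}$ arises from $H_n$ by successively leaf-appending the vertices $x_1,\dots,x_{n+1}$ to (distinct) vertices of $H_n$. Iterating Lemma~\ref{lem:leaf-append} therefore yields that $\Hext_{n+1}$ is median as well. In particular, the median of any three vertices is unique, so to identify $\med_{\Hext_{n+1}}(\rho,x_i,x_j)$ it suffices to exhibit \emph{one} vertex that lies simultaneously on a shortest path between each of the three pairs $\{\rho,x_i\}$, $\{\rho,x_j\}$, $\{x_i,x_j\}$.

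For the formula itself, I would first check that $v\coloneqq (i,j-1)$ is a vertex of $\Hext_{n+1}$: using Fact~\ref{obs:coord}, since $i<j$ we have $i-1\leq j-1$, so $(i,j-1)\in V(H_n)\subseteq V(\Hext_{n+1})$. The key observation for the distance computation is that for each pair $(u,w)$ among $\rho=(1,n)$, $v=(i,j-1)$, and the attachment vertices $a_i,a_j$ of $x_i,x_j$, one can construct a monotone path in $P_n\Box P_n$ joining $u$ and $w$ that stays within $H_n$: traversing the coordinates one at a time along the ``staircase'' $(k,k)\to(k,k{+}1)\to(k{+}1,k{+}1)\to\dots$ never violates the condition $j'\geq i'-1$ characterising $V(H_n)$ from Fact~\ref{obs:coord}, because our vertices of interest lie on or above the diagonal $j'=i'-1$ and $i<j$. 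Hence $d_{H_n}$ equals the Manhattan distance on these pairs by Lemma~\ref{lem:propertyCart}, and distances to the leaves $x_k$ are obtained by adding one for the pendant edge $\{x_k,a_k\}$.

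With the distances computed, the verification reduces to arithmetic: one checks that $d(v,\rho)+d(v,x_i)=d(\rho,x_i)$, $d(v,\rho)+d(v,x_j)=d(\rho,x_j)$, and $d(v,x_i)+d(v,x_j)=d(x_i,x_j)$ hold, thereby placing $v$ in the intersection of the three intervals. Uniqueness of the median then forces $\med_{\Hext_{n+1}}(\rho,x_i,x_j)=(i,j-1)$. The main bookkeeping obstacle is that the attachment vertices $a_1=(1,1)$ and $a_{n+1}=(n,n)$ do \emph{not} follow the generic pattern $a_k=(k,k-1)$; consequently one has to split into the subcases $i=1$, $j=n+1$, and $2\le i<j\le n$. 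Each subcase is mechanical, and in every instance the claimed sums match.
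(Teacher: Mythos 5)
Your proposal follows the same strategy as the paper: establish that $\Hext_{n+1}$ is median via Lemma~\ref{lem:hg-median} plus Lemma~\ref{lem:leaf-append}, then pin down $\med_{\Hext_{n+1}}(\rho,x_i,x_j)$ by exhibiting $(i,j-1)$ on shortest paths for all three pairs, using the fact that distances between the relevant vertices of $H_n$ agree with the Manhattan distance in $P_n\Box P_n$, with the same case split for the exceptional attachment vertices $a_1=(1,1)$ and $a_{n+1}=(n,n)$. One small correction when you flesh out the distance computation: the alternating ``staircase'' $(k,k)\to(k,k+1)\to(k+1,k+1)\to\dots$ does not in general yield a monotone path inside $H_n$ between the pairs you need (e.g.\ an alternating walk from $\rho=(1,n)$ toward $(i,i-1)$ dips below the diagonal once the increments in the first coordinate outpace the available slack); the paper instead uses L-shaped paths, first along a row (resp.\ column) and then a column (resp.\ row), for which membership in $H_n$ via Fact~\ref{obs:coord} is immediate, and you should use the same.
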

\begin{proof}
  For simplicity we put $ \med(\dots) =\med_{\Hext_{n+1}}(\dots)$.  Since
  $H_{n}$ as well as all single vertex graphs $G_j=(\{x_j\},\emptyset)$,
  $1\leq j\leq n+1$ are median graphs, we can apply Lemma
  \ref{lem:hg-median} together with Lemma \ref{lem:leaf-append} to conclude
  that $\Hext_{n+1}$ is a median graph.

  We continue by showing that $\med(\rho,x_i,x_j)=(i,j-1)$.  We have
  leaf-appended each $x_j\in X$ to a unique vertex $v_j$ in $H_n$, i.e.,
  $v_1 = (1,1)$ $v_{n+1} = (n,n)$ and $v_j = (j,j-1)$, $2\leq j\leq n$.
  For simplicity, we put $H\coloneqq \Hext_{n+1}$ and $G = P_n \Box P_n$.

  In the following, we will make frequent use of the following observation:
  Since $H_n\subseteq G$, we have $d_H(x,y)= d_{H_n}(x,y)\geq d_{G}(x,y)$
  for all $x,y\in V(H_n)\subseteq V(H)$.  Hence, if there is a path
  $P_H(x,y)$ of length $d_{G}(x,y)$, then $P_H(x,y)$ must be a shortest
  path in $H$ between $x$ and $y$. For simplicity we denote with ${P}_{ij}$
  the path $P_H(v_i,v_j)$ in $H$ connecting $v_i$ and $v_j$, $i<j$.

  \begin{claim}
    \label{c:aans}
    There is a shortest path $P_{i,j}$ in $H$ connecting $v_i$ and $v_j$,
    $i<j$ that contains the vertex $w_{i,j} = (i,j-1)$.
  \end{claim}
  \begin{claim-proof}
    Let us start with the special case $v_1 = (1,1)$ and $v_{n+1} = (n,n)$.
    By construction of $H_n$, all vertices $(1,k)$ with $1\leq k\leq n$ and
    all vertices $(k,n)$ with $1\leq k \leq n$ are contained in $H_n$.
    Thus, there is a path $P_{1,n+1}$ induced by the vertices
    $(1,1), \dots, (1,n), \dots,(n,n)$ that has length $2(n-1)=2n-2$.  By
    Lemma \ref{lem:propertyCart}, $d_{G}(v_1,v_{n+1}) = (n-1)+(n-1)=2n-2$,
    which implies that $P_{1,n+1}$ is a shortest path in $H$ that, in
    particular, includes $w_{1,n+1} = (1,n)$.

    Now, consider $v_1 = (1,1)$ and $v_j = (j,j-1)$, $2\leq j\leq n$.
    Note, by construction of $H_n$, all vertices $(1,k)$ with
    $1\leq k\leq j-1 < n$ and all vertices $(k,j-1)$ with $1\leq k \leq j$
    are contained in $H_n$.  Thus, there is a path $P_{1,j}$ induced by
    vertices $(1,1), \dots, (1,j-1), \dots,(j,j-1)$ of length
    $(j-2)+(j-1)$, $2\leq j\leq n$.  By Lemma \ref{lem:propertyCart},
    $d_{G}(v_1,v_j) = (j-1)+(j-2)$, which implies that $P_{1,j}$ is a
    shortest path in $H$ that, in particular, includes $w_{1,j} = (1,j-1)$.

    By similar arguments, there is a path $P_{i,n+1}$ from $v_i = (i,i-1)$
    and $v_{n+1} = (n,n)$, $2\leq i\leq n$ along the vertices
    $(i,i-1), \dots, (i,n), \dots,(n,n)$ of length $(n-i+1)+(n-i)$.  By
    Lemma \ref{lem:propertyCart}, $d_{G}(v_i,v_{n+1}) = (n-i)+(n-i+1)$,
    which implies that $P_{i,n+1}$ is a shortest path in $H$ that, in
    particular, includes $w_{i,n+1} = (i,n)$.

    Now consider $v_i = (i,i-1)$ and $v_j = (j,j-1)$, $2\leq i<j\leq n$.
    By Remark \ref{obs:coord}, all vertices $(i,k)$ with $i-1\leq k \leq n$
    and all vertices $(k,j-1)$ with $i-1\leq k\leq (j-1)+1=j$ (since
    $j-1\neq n$) are contained in $H_n$.  Thus, there is a path $P_{i,j}$
    induced by vertices
    $(i,i-1), (i,i), \dots, (i,j-1), (i+1,j-1),\dots,(j,j-1)$ of length
    $(j-1-(i-1))+ (j-i) = 2j-2i$.  By Lemma \ref{lem:propertyCart},
    $d_{G}(v_i,v_j) = (j-i) + (j-1-(i-1)) = 2j-2i$, which implies that
    $P_{i,j}$ is a shortest path in $H$ that, in particular, includes
    $w_{i,j} = (i,j-1)$.
  \end{claim-proof}

  \begin{claim}
    \label{c:zwaa}
    There are shortest paths ${P}_H(\rho,v_i)$ and ${P}_H(\rho,v_j)$ that
    both contain vertex $w_{i,j} = (i,j-1)$ for all $1\leq i < j\leq n+1$.
  \end{claim}
  \begin{claim-proof}
    For simplicity, we put ${P}_{\rho,i}\coloneqq {P}_H(\rho,v_i)$ and
    ${P}_{\rho,j}\coloneqq {P}_H(\rho,v_j)$.

    Let us start again with $v_1 = (1,1)$. In this case, there is a path
    ${P}_{\rho,1}$ from $\rho$ to $v_1$ along the vertices
    $\rho=(1,n), \dots, (1,1)=v_1$ of length $n-1$ which is, by
    Lemma \ref{lem:propertyCart}, the same as $d_{G}(\rho,v_1)$. Hence,
    ${P}_{\rho,1}$ is a shortest path in $G$.  In particular,
    $w_{1,j}=(1,j-1)$ is contained in ${P}_{\rho,1}$ for all $1<j\leq n+1$.

    By similar arguments and using the path ${P}_{\rho, n+1}$ along the
    vertices $\rho=(1,n), \dots, (n,n)=v_{n+1}$, the path ${P}_{\rho, n+1}$
    contains all vertices $w_{i,n+1}=(i,n)$ with $1\leq i<j\leq n+1$.
     
    Now assume that $v_i$ and $v_j$ are chosen such that $1<i<j<n+1$.  By
    similar arguments as in the proof of Claim~\ref{c:aans}, there is a
    path ${P}_{\rho,i}$ along the vertices
    $\rho=(1,n), \dots, (i,n), \dots, (i,j-1), \dots, (i,i-1) = v_i$.  This
    path ${P}_{\rho, i}$ has length $(i-1)+(n-(i-1)) =n$, Moreover, there
    is a path ${P}_{\rho,j}$ along the vertices
    $\rho=(1,n), \dots,(1,j-1), \dots, (i,j-1), \dots, (j,j-1) = v_j$.
    This ${P}_{\rho, j}$ has length $(n-(j-1))+(j-1)=n$, which is, by Lemma
    \ref{lem:propertyCart}, the same as $d_{G}(\rho,v_j)$.  For both cases,
    Lemma \ref{lem:propertyCart} implies that ${P}_{\rho, i}$ and
    ${P}_{\rho, j}$ are shortest paths.  In particular, both paths contain
    $w_{i,j}=(i,j-1)$.
  \end{claim-proof}

  We are now in the position to prove the final statement
  $\med(\rho,x_i,x_j)=(i,j-1)$ where $x_i,x_j\in X$, $i<j$.  Since each
  $x_i$ is leaf-appended to $v_i$ every shortest path from $x_i$ to every
  other vertex must contain $v_i$.  In other words, every shortest path
  from $x_i$ to some vertex $z$ consists of the edge $\{x_i,v_i\}$ and a
  shortest path from $v_i$ to $z$.  Thus, Claims~\ref{c:aans}
  and~\ref{c:zwaa} imply that $w_{ij}=(i,j-1)$ is contained in a shortest
  path from $x_i$ to $x_j$ as well as in a shortest path from $\rho=(1,n)$
  to $x_i$ and $x_j$, respectively.  Finally, since $H$ is a median graph
  it must hold that $\med(\rho,x_i,x_j)=(i,j-1)$, $1\leq i < j \leq n+1$.
\end{proof}

\begin{corollary}
  Let $\Hext_{n+1}$ be an extended half-grid with leaf set
  $L=\{x_1,\dots,x_{n+1}\}$.  Then, for each
  $w\in V_{\med_{\Hext_{n+1}},L}$ there is a unique pair $(x_i,x_j)$, $i<j$
  such that $w = \med_{\Hext_{n+1}}(\rho,x_i,x_j)$.
  \label{cor:unique-med}
\end{corollary}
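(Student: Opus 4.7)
The plan is to derive the corollary as an immediate consequence of Proposition~\ref{prop:ext-hg-median}. By that proposition, for every pair of leaves $x_i,x_j\in L$ with $i<j$ we have the explicit formula $\med_{\Hext_{n+1}}(\rho,x_i,x_j)=(i,j-1)$. Thus every element $w\in V_{\med_{\Hext_{n+1}},L}$ equals $(i,j-1)$ for at least one such pair, by Definition~\ref{def:VL}.

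To establish uniqueness, I would argue that the map $\Phi\colon \{(i,j)\mid 1\le i<j\le n+1\}\to V(H_n)$ defined by $\Phi(i,j)=(i,j-1)$ is injective: if $(i,j-1)=(i',j'-1)$, then $i=i'$ and $j=j'$. Hence whenever $w=(i,j-1)=(i',j'-1)$ with $i<j$ and $i'<j'$, we have $(x_i,x_j)=(x_{i'},x_{j'})$, which proves the claimed uniqueness.

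There is no real obstacle here: the entire content is packaged in Proposition~\ref{prop:ext-hg-median}, and the corollary is simply the observation that the bijective reading of the formula $(i,j)\mapsto(i,j-1)$ forces the pair of leaf indices defining $w$ to be unique.
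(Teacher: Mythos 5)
Your proof is correct and is essentially the argument the paper intends: the corollary is stated without a separate proof precisely because it follows directly from the explicit formula $\med_{\Hext_{n+1}}(\rho,x_i,x_j)=(i,j-1)$ in Proposition~\ref{prop:ext-hg-median}, together with the injectivity of $(i,j)\mapsto(i,j-1)$ on pairs $i<j$, which is what you observe.
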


\begin{theorem}
  For every symmetric map $\delta\colon \Xirr \to \Upsilon$, there is a
  labeled median graph $(G,t)$ with $O(|X^2|)$ vertices and leaf-set $X$
  that explains $\delta$ and such that its root $\rho_G$ is in
  $V_{\med_G,X}$.
  \label{thm:median-explains}
\end{theorem}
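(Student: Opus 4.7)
The plan is to take $G=\Hext_{|X|}$ as the explaining graph and verify the three requirements: (i)~it is a labeled median graph with leaf-set $X$ that explains $\delta$, (ii)~it has $O(|X|^2)$ vertices, and (iii)~its root lies in $V_{\med_G,X}$. All of the structural work has already been done; what remains is to assemble the pieces.

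Concretely, assuming $|X|\ge 3$, write $n=|X|-1\ge 2$ and let $L=\{x_1,\dots,x_{n+1}\}=X$ be the leaf set of $\Hext_{n+1}$. By Proposition~\ref{prop:ext-hg-median}, $\Hext_{n+1}$ is a median graph and for $i<j$ we have $\med_{\Hext_{n+1}}(\rho,x_i,x_j)=(i,j-1)$; by Corollary~\ref{cor:unique-med} these medians are pairwise distinct across distinct pairs $\{x_i,x_j\}$. Hence setting $t\bigl(\med_{\Hext_{n+1}}(\rho,x_i,x_j)\bigr)\coloneqq \delta(x_i,x_j)$ for all $i<j$ defines a map $t\colon V_{\med_{\Hext_{n+1}},L}\to\Upsilon$ without conflicts, and symmetry of $\delta$ gives $\delta(x,y)=t(\med_{\Hext_{n+1}}(\rho,x,y))$ for all distinct $x,y\in X$. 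Thus $(\Hext_{n+1},t)$ explains $\delta$ in the sense of Definition~\ref{def:explain}.

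For the vertex count, $\Hext_{n+1}$ is obtained from $P_n\Box P_n$ by deleting vertices and appending $n+1$ leaves. Since $|V(P_n\Box P_n)|=n^2$, we have $|V(\Hext_{n+1})|\le n^2+(n+1)=O(|X|^2)$. For the root condition, recall that $\rho=(1,n)$ by definition and apply Proposition~\ref{prop:ext-hg-median} with $i=1$, $j=n+1$ to obtain $\med_{\Hext_{n+1}}(\rho,x_1,x_{n+1})=(1,n)=\rho$; since $\rho$, $x_1$, and $x_{n+1}$ are pairwise distinct, we conclude $\rho\in V_{\med_{\Hext_{n+1}},X}$.

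The only loose end is the boundary case $|X|\le 2$. If $|X|=2$, the three-vertex path $x_1$–$\rho$–$x_2$ (a tree, hence a median graph) with root $\rho$ and $t(\rho)=\delta(x_1,x_2)$ trivially satisfies all three conditions, and $|X|\le 1$ is vacuous. There is no real obstacle here; the substance is packaged in Proposition~\ref{prop:ext-hg-median} and Corollary~\ref{cor:unique-med}, and the theorem is an immediate corollary once one chooses the root-containing pair $(x_1,x_{n+1})$ to witness $\rho\in V_{\med_G,X}$.
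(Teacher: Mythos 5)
Your proof is correct and follows the paper's argument essentially verbatim: take $G=\Hext_{|X|}$, invoke Proposition~\ref{prop:ext-hg-median} and Corollary~\ref{cor:unique-med} to get a well-defined labeling, count $O(|X|^2)$ vertices, and use the pair $(x_1,x_{n+1})$ to place $\rho$ in $V_{\med_G,X}$, handling $|X|\le 2$ separately. No differences worth noting.
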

\begin{proof}
  If $|X|=2$, then one can easily verify that this can explained by a
  rooted tree with one inner vertex and two leaves.  Consider $\Hext_{|X|}$
  with leaf set $X=\{x_1, \dots, x_{n+1}\}$, $|X|\geq 3$ and root $\rho$.
  By construction, $\Hext_{|X|}$ has $O(|X^2|)$ vertices.  Put
  $t(\med_{\Hext_{|X|}}(\rho,x,y)) = \delta(x,y)$ for all distinct
  $x,y\in X$.  By Cor. \ref{cor:unique-med},
  $\med_{\Hext_{|X|}}(\rho,x,y))$ is uniquely determined for all distinct
  $x,y\in X$. Hence, $t(\med_{\Hext_{|X|}}(\rho,x,y))$ is well-defined and
  $(G=\Hext_{|X|} , t)$ explains $\delta$. Moreover, by Prop.\
  \ref{prop:ext-hg-median},
  $\med_{\Hext_{|X|}}(\rho,x_1,x_{n+1})=(1,n) = \rho$. Therefore,
  $\rho\in V_{\med_{G},X}$.
\end{proof}
An example for the construction as in the proof of Thm.\
\ref{thm:median-explains} is provided in Fig.\ \ref{fig:exmpl}.

While $\Qext_{|X|}$ has $2^{|X|}+|X|$ vertices, the graph $\Hext_{|X|}$ has
only $\Theta(|X|^2)$ vertices, and thus, is more space-efficient. There are
maps for which we cannot avoid that the graph that explains it has
$\Theta(|X|^2)$ vertices. In particular, if
$\delta\colon \Xirr \to \Upsilon$ is a surjective map with
$|\Upsilon| = \vert \binom{X}{2}\vert$, then
$\delta(x,y)\neq \delta(x',y')$ for all pairs $(x,y), (x',y') \in \Xirr$
with $\{x,y\}\neq \{x',y'\}$. In this case, all the labels of the
respective medians must be distinct and thus, $\Theta(|X|^2)$ medians must
exist.  In other words, halfgrids are in some sense optimal if all
$\delta(x,y)$ are distinct.  In general, however, we want to explain maps
$\delta$ by median graphs that are closer to trees. This leads us directly
to the concept of the modular decomposition of a map which is explained in
the next section.
 	
\section{Median graphs from Modular Decomposition Trees}
\label{sect:MDT}

This section makes extensive use of results established in \cite{HSW:16}
for so-called 2-structures (cf.\
\cite{ER1:90,ER2:90,Ehrenfeucht1995,ehrenfeucht1999theory,EHPR:96,McConnell:95,
  EHMS:94,ehrenfeucht1999theory,EHRENFEUCHT1990343,EHR:94} and (not
necessarily symmetric) maps $\delta$. A (labeled) \emph{2-structure} is a
triple $g = (X,\Upsilon,\delta)$ where $X$ and $\Upsilon$ are nonempty sets
and $\delta\colon \Xirr \to \Upsilon$ is a map. Since 2-structures are
essentially determined by $\delta\colon \Xirr\to\Upsilon$ we use such maps,
instead of 2-structures, which is more suitable for our purposes. The idea
underlying this section is to start from the modular decomposition tree of
a symmetric map $\delta$ and to ``expand'' this tree in a principled manner
into a rooted median graph that explains $\delta$. We thus start with the
notion of modules for symmetric maps.
\begin{definition}
  A \emph{module} of a symmetric map $\delta\colon \Xirr\to\Upsilon$ is a
  subset $M\subseteq X$ such that $\delta(x,z)=\delta(y,z)$ holds for all
  $x,y \in M$ and $z\in X\setminus M$. A module $M$ of $\delta$ is
  \emph{strong} if $M$ does not overlap with any other module of $\delta$,
  that is, $M\cap M' \in \{M, M', \emptyset\}$ for all modules $M'$ of
  $\delta$.
\end{definition}
We write $\M(\delta)$ for the set of all modules of a symmetric map
$\delta$ and $\Ms(\delta)\subseteq \M(\delta)$ for the set of all strong
modules of $\delta$. The empty set $\emptyset$, the complete vertex set
$X$, and the singletons $\{v\}$ are always modules. They are called the
\emph{trivial} modules of $\delta$. We will assume from here on, that a
module is non-empty unless otherwise indicated.

The set $\Ms(\delta)$ of strong modules is uniquely determined
\cite{HSW:16,EHMS:94}.  While there may be exponentially many modules, the
size of the set of strong modules is in $O(|X|)$ \cite{EHMS:94}. In
particular, $X$ and the singletons $\{v\}$, $v\in X$ are strong
modules. Since strong modules do not overlap this implies that
$\Ms(\delta)$ forms a hierarchy and, by Prop.\ \ref{A:prop:hierarchy},
gives rise to a unique tree representation $T_{\delta}$ of $\delta$, known
as the \emph{modular decomposition tree} (\emph{MDT}) of $\delta$. The
vertices of $T_\delta$ are (identified with) the elements of $\Ms(\delta)$.
Adjacency in $T_\delta$ is defined by the maximal proper inclusion
relation, that is, there is an edge $\{M,M'\}$ between
$M,M'\in \Ms(\delta)$ iff $M\subsetneq M'$ and there is no
$M''\in \Ms(\delta)$ such that $M\subsetneq M'' \subsetneq M'$.  The root
of $T_\delta$ is (identified with) $X$ and every leaf $v$ corresponds to the
singleton $\{v\}$, $v\in X$.

Uniqueness and the hierarchical structure of $\Ms(\delta)$ implies that
there is a unique partition $\Mmax(\delta) = \{M_1,\dots, M_k\}$ of $X$
into maximal (w.r.t.\ inclusion) strong modules $M_j\ne X$ of $\delta$
\cite{ER1:90,ER2:90}.  Since $X\notin \Mmax(\delta)$ the set
$\Mmax(\delta)$ consists of $k\ge 2$ strong modules, whenever $|X|>1$.

For later reference, we recall
\begin{lemma}[\cite{ER1:90}, Lemma 4.11]
  Let $M_1, M_2 \in \M(\delta)$ be two disjoint modules of a symmetric map
  $\delta$.  Then there is a unique label $i\in \Upsilon$ such that
  $\delta(x,y) = \delta(y,x) = i$ for all $x \in M_1$ and $y \in M_2$.
  \label{lem:arcs-modules}
\end{lemma}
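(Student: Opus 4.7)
The plan is to fix arbitrary $x_1,x_2 \in M_1$ and $y_1,y_2 \in M_2$ and show that $\delta(x_1,y_1) = \delta(x_2,y_2)$ by a two-step ``swap'' argument that exploits the module property of $M_1$ and $M_2$ in turn. Once this is done, the fact that $\delta$ is symmetric immediately upgrades the equality $\delta(x,y)=i$ to $\delta(y,x)=i$ as well, and uniqueness of the common label $i$ is automatic since it is determined by evaluating $\delta$ on any single pair $(x,y) \in M_1\times M_2$.

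For the first step I would use that $M_1,M_2$ are disjoint, so $y_1 \in X\setminus M_1$. Since $M_1$ is a module, the defining property gives $\delta(x_1,y_1)=\delta(x_2,y_1)$. For the second step, $x_2 \in X\setminus M_2$ (again by disjointness), and since $M_2$ is a module, $\delta(y_1,x_2)=\delta(y_2,x_2)$; symmetry of $\delta$ then yields $\delta(x_2,y_1)=\delta(x_2,y_2)$. Chaining these two equalities gives $\delta(x_1,y_1)=\delta(x_2,y_2)$. Since $x_1,x_2,y_1,y_2$ were arbitrary, the map $\delta$ is constant on $M_1\times M_2$, and we set $i$ to be this common value.

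There is no real obstacle here; the only subtlety worth stating explicitly is that the module condition, as formulated in the preceding paragraph, requires the ``outside'' element to lie in $X\setminus M$, which is exactly what disjointness $M_1\cap M_2=\emptyset$ provides (first with $y_1\notin M_1$, then with $x_2\notin M_2$). No assumption on whether $M_1$ or $M_2$ is strong is needed, and the same two applications of the module property work uniformly regardless of the sizes of $M_1$ and $M_2$ (including singleton modules, where the swap on that side is vacuous).
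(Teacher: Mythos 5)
Your proof is correct. Note that the paper itself does not prove this lemma but instead cites it from Ehrenfeucht and Rozenberg (Lemma~4.11 of \cite{ER1:90}); so there is no in-paper proof to compare against. Your two-step swap argument is the standard, essentially unavoidable derivation: disjointness of $M_1$ and $M_2$ is exactly what lets you treat $y_1$ as an outside element for $M_1$ and then $x_2$ as an outside element for $M_2$, and symmetry of $\delta$ bridges the two applications of the module property. The uniqueness of $i$ is indeed immediate once constancy on $M_1\times M_2$ is established, and you correctly observe that no strength assumption on the modules is needed and that singleton modules make the corresponding swap vacuous. No gaps.
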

In order to infer $\delta$ from $T_\delta$ we need to determine the label 
$\delta(x,y)$ of all pairs of distinct leaves $x,y$ from $T_\delta$.  Hence,
we need to define a labeling function $t_\delta$ that assigns this ``missing
information'' to the inner vertices of $T_\delta$. 

The simplest case for the construction of $t_\delta$ is given by symmetric
maps $\delta\colon \Xirr\to\Upsilon$ that satisfy the following two
axioms:				
\begin{itemize}
\item[(U1)] there exists no subset $\{x,y,u,v\}\in \binom{X}{4}$ such that
  $\delta(x,y)=\delta(y,u)=\delta(u,v) \neq
  \delta(y,v)=\delta(x,v)=\delta(x,u)$.
\item[(U2)] $|\{\delta(x,y), \delta(x,z),\delta(y,z)\}| \le 2$ for all
  $x,y,z\in X$.
\end{itemize}
A symmetric map that satisfies (U1) and (U2) is called a \emph{symbolic
  ultrametric} \cite{Boecker:98}. In this case, there is a unique vertex
labeled tree $(T^*,t^*)$, called \emph{discriminating symbolic
  representation} of $\delta$, that satisfies $t^*(v)\neq t^*(u)$ for all
edges $\{u,v\}\in E(T^*)$ with $u,v \in V^0(T^*)$ being inner vertices and
for which $t(\lca_T(x,y)) = \delta(x,y) $ for all distinct $x,y\in X$ (cf.\
\cite{Boecker:98} and \cite[Prop.\ 1]{Hellmuth:13a}).  Hence, given the
discriminating symbolic representation $(T^*,t^*)$ of a symbolic
ultrametric $\delta$ we can uniquely recover $\delta$ from $(T^*,t^*)$.

Now consider the MDT $T_\delta$ of $\delta$.  In case $\delta$ is a
symbolic ultrametric we can also equip $T_\delta$ with a labeling
$t_\delta$, by setting $t_\delta(\lca_{T_{\delta}}(x,y))= \delta(x,y)$ for
all distinct $x,y\in X$.  If $\delta$ is a symbolic ultrametric then Lemma
7 and Theorem 6 and 7 in \cite{HSW:16} imply that $t_\delta$ is
well-defined and satisfies $t_\delta(v)\neq t_\delta(u)$ for all edges
$\{u,v\}\in E(T_\delta)$ with $u,v \in V^0(T_\delta)$.  In particular, we
have $\delta(x,y) = i$ if and only if $t_\delta(\lca_{T_{\delta}}(x,y))=i$.
Since the discriminating symbolic representation $(T^*,t^*)$ and modular
decomposition trees $(T_\delta,t_\delta)$ are unique (up to isomorphism),
we can conclude that the two trees $(T^*,t^*)$ and $(T_\delta,t_\delta)$
must be isomorphic.  We summarize this discussion in the following
\begin{theorem}
  \label{thm:symbolic}
  Suppose $\delta\colon \Xirr\to\Upsilon$ is a symmetric map.  Then there
  is a discriminating symbolic representation $(T^*,t^*)$ of $\delta$ if
  and only if $\delta$ is a symbolic ultrametric. In this case, $(T^*,t^*)$
  and the labeled MDT $(T_\delta,t_\delta)$ are isomorphic.
\end{theorem}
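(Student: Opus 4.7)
The plan is to prove the two implications separately, and then obtain the isomorphism claim as a corollary of uniqueness. Throughout, I will lean on the module-theoretic results of \cite{HSW:16} rather than re-deriving them.

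For the direction ``$(T^*,t^*)$ exists $\Rightarrow$ $\delta$ is a symbolic ultrametric'', I would argue directly from the defining identity $\delta(x,y) = t^*(\lca_{T^*}(x,y))$. For (U2), observe that among the three last common ancestors $\lca_{T^*}(x,y)$, $\lca_{T^*}(x,z)$, $\lca_{T^*}(y,z)$ in any rooted tree at least two must coincide, so at most two labels appear. For (U1), I would spell out the small case analysis on the shape of the subtree of $T^*$ induced by a hypothetical 4-set $\{x,y,u,v\}$ violating (U1): in each topology the forced equalities and inequalities among the pairwise lca-labels contradict the discriminating condition that $t^*(u)\neq t^*(v)$ on adjacent inner vertices. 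This is the standard Böcker--Dress argument and it can also be quoted rather than reproved.

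For the converse direction, I would take the MDT $T_\delta$ (which exists by Prop.~\ref{A:prop:hierarchy} applied to the hierarchy $\Ms(\delta)$) and define $t_\delta(v)\coloneqq \delta(x,y)$ for any leaves $x,y$ with $\lca_{T_\delta}(x,y)=v$. The real content is well-definedness: if $\lca_{T_\delta}(x,y)=\lca_{T_\delta}(x',y')=v$, then $\delta(x,y)=\delta(x',y')$. This is exactly where I would invoke Lemma~7 and Theorems~6 and 7 of \cite{HSW:16}: they translate (U1) and (U2) into statements about the labels that Lemma~\ref{lem:arcs-modules} assigns to pairs of disjoint strong modules sitting under a common parent in $T_\delta$, and deliver both well-definedness of $t_\delta$ and the property $t_\delta(u)\neq t_\delta(v)$ on every edge $\{u,v\}$ between inner vertices. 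Thus $(T_\delta,t_\delta)$ itself is a discriminating symbolic representation of $\delta$.

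For the final isomorphism assertion, I would invoke uniqueness up to isomorphism of the discriminating symbolic representation, as established in \cite{Boecker:98} and recorded in \cite[Prop.~1]{Hellmuth:13a}: any discriminating representation $(T^*,t^*)$ of $\delta$ must be isomorphic to the one we constructed, namely $(T_\delta, t_\delta)$.

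The main obstacle is the well-definedness of $t_\delta$ in the converse direction; everything else is either a short tree-combinatorial check or an appeal to uniqueness. Rather than re-proving well-definedness from scratch, my plan is to package it as a direct consequence of the cited results in \cite{HSW:16}, which is how the paragraph preceding the theorem already foreshadows the proof.
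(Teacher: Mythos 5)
Your proposal is correct and follows essentially the same route as the paper: cite \cite{Boecker:98} and \cite[Prop.~1]{Hellmuth:13a} for the equivalence between symbolic ultrametrics and the existence (and uniqueness) of a discriminating symbolic representation, invoke Lemma~7 together with Theorems~6 and~7 of \cite{HSW:16} to show that $t_\delta$ on the MDT is well-defined and discriminating, and conclude the isomorphism from uniqueness. The only cosmetic difference is that you sketch a direct verification of (U1) and (U2) for the forward direction (the ``at least two of the three pairwise lcas coincide'' observation for (U2) is correct), whereas the paper simply quotes the B\"ocker--Dress result there; you note yourself that this can be quoted, so the two arguments coincide in substance.
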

Note, for symbolic ultrametrics $\delta$, we thus obtain a vertex labeled
median graph $(T_\delta, t_\delta)$ that explains $\delta$, since
$\lca_{T_\delta}(x,y) = \med_{T_\delta}(\rho, x,y)$.

\begin{figure}[t]
  \begin{center}
    \includegraphics[width=0.3\textwidth]{./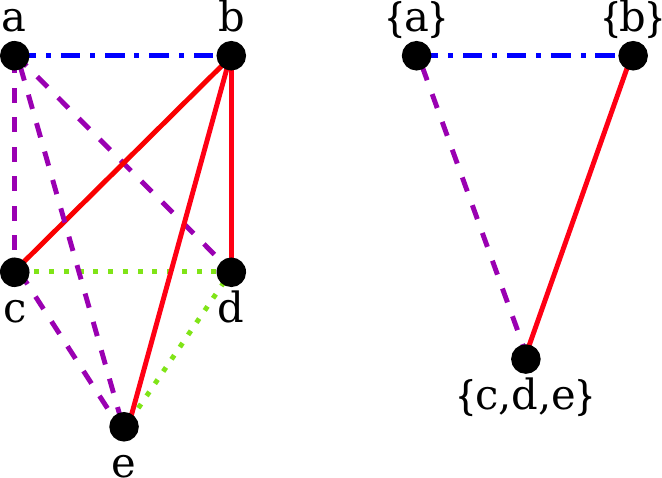}
  \end{center}
  \caption{Left: The graph-representation of a map
    $\delta\colon\Xirr\to\Upsilon$ with $X=\{a,b,c,d,e\}$ and
    $\delta(a,b)=\mathrm{blue}$ \emph{(dashed-dotted-line)},
    $\delta(b,c)=\delta(b,d)=\delta(b,e)=\mathrm{red}$ \emph{(solid-line)},
    $\delta(a,c)=\delta(a,d)=\delta(a,e)=\delta(c,e)=\mathrm{purple}$
    \emph{(dashed-line)}, $\delta(c,d)=\delta(d,e)=\mathrm{green}$
    \emph{(dotted-line)}.  Right: The graph-representation of the map
    $\delta/\Mmax(\delta) \colon \Mmax(\delta)^{2}_\mathrm{irr} \to
    \Upsilon$.  Note, $\delta$ is not a symbolic ultrametric, since it does
    not satisfy (U2) as e.g.\
    $|\{\delta(a,b), \delta(a,c),\delta(b,c)\}| =3$.  The strong modules of
    $\delta$ are the trivial modules $X$ and $\{x\}$ for all $x\in X$ as
    well as $\{c,e\}$ and $M\coloneqq \{c,d,e\}$. Therefore,
    $\Mmax(\delta) = \{\{a\},\{b\}, M\}$ and for the quotient map
    $\delta/\Mmax(\delta)$ we have
    $\delta/\Mmax(\delta)(\{a\},\{b\}) =\mathrm{blue}$,
    $\delta/\Mmax(\delta)(\{a\},M) =\mathrm{purple}$ and
    $\delta/\Mmax(\delta)(\{b\},M) =\mathrm{red}$. Note, in this example,
    $\delta/\Mmax(\delta)$ is prime since $\M(\delta/\Mmax(\delta))$
    consists of the trivial modules $\{\{a\}\}$, $\{\{b\}\}$ and $\{M\}$
    only.  }
  \label{fig:quotient}
\end{figure}

Not all maps $\delta$ are symbolic ultrametrics. The modular decomposition
tree $T_{\delta}$ still exists, but in general there will be no labeling
$t_\delta$ such that $t_\delta(\lca_{T_{\delta}}(x,y)) = \delta(x,y)$ holds
for all $x\ne y$, see Fig.\ \ref{fig:MDT}.
As a remedy, let us consider the following
\begin{definition}
  Let $\delta\colon\Xirr\to\Upsilon$ be a symmetric map.  A \emph{quotient
    map} is the map
  $\delta/\Mmax(\delta) \colon \Mmax(\delta)^{2}_\mathrm{irr} \to \Upsilon$
  obtained from $\delta$ by putting, for all $M,M'\in \Mmax(\delta)$,
  $\delta/\Mmax(\delta)(M,M') = \delta(x,y)$ for some $x\in M$, $y\in M'$.
\end{definition}
We first note that $\delta/\Mmax(\delta)$ is well-defined because
$\Mmax(\delta)$ is well-defined and for every two distinct and, therefore,
disjoint modules $M, M' \in \Mmax(\delta)$ there is a label $i\in \Upsilon$
with $\delta(x,y)=i$ for all $x\in M$ and $y\in M'$ (cf.\ Lemma
\ref{lem:arcs-modules}).

In addition to symbolic ultrametrics, there are two other important
subclasses of symmetric maps $\delta\colon \Xirr\to\Upsilon$:
\begin{itemize} 
\item[] A map $\delta$ is \emph{prime} if $\M(\delta)$ consists of trivial
  modules only.
\item[] $\delta$ is \emph{complete} if for all $(x,y), (x',y')\in \Xirr$,
  $\delta(x,y) = \delta(x',y')$.
\end{itemize} 
Although maps $\delta$ are not necessarily prime or complete, their
quotients $\delta/\Mmax(\delta)$ are always either of one or the other
type.
\begin{lemma}[\cite{ER2:90,EHPR:96,HSW:16}]
  Let $\delta$ be a symmetric map. Then the quotient $\delta/\Mmax(\delta)$
  is either complete or prime.  If $\delta$ is a symbolic ultrametric, then
  $\delta/\Mmax(\delta)$ is complete.
\end{lemma}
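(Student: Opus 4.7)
The plan for the first claim is induction on $|X|$, resting on the structural sub-claim that every strong module of $\delta' := \delta/\Mmax(\delta)$ is trivial. The main tool is the correspondence that, for any $\mathcal{N} \subseteq \Mmax(\delta)$, the set $\mathcal{N}$ is a module of $\delta'$ if and only if $N := \bigcup_{M \in \mathcal{N}} M$ is a module of $\delta$. The forward direction is immediate; the reverse follows by splitting an arbitrary pair $(x,z)$ with $x \in N$, $z \notin N$, into the cases $x \in M_i \in \mathcal{N}$ with $z$ in the same $M_i$ (impossible as $z \notin N$) or in some $M_k \notin \mathcal{N}$ (handled by Lemma~\ref{lem:arcs-modules}), and using $\mathcal{N}$-modularity in $\delta'$ to replace $M_i$ by any other $M_j \in \mathcal{N}$. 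I would also record the standing observation that, since each $M_i \in \Mmax(\delta)$ is strong, every module $N''$ of $\delta$ is either contained in some $M_i$ or is a union of some $M_i$'s.

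Given the correspondence, if $\mathcal{N}$ were a non-trivial strong module of $\delta'$, I would argue that $N := \bigcup_{M \in \mathcal{N}} M$ is in fact a strong module of $\delta$: a hypothetical module $N''$ overlapping $N$ cannot sit inside a single $M_i$ (it would then be either contained in $N$ or disjoint from it), hence must be a union of some $M_i$'s, in which case the corresponding subset is a module of $\delta'$ overlapping $\mathcal{N}$ — contradicting strongness of $\mathcal{N}$. But $N$ strictly contains some $M \in \mathcal{N}$ and is strictly contained in $X$, contradicting the maximality of $M$ in $\Ms(\delta)\setminus\{X\}$. Consequently $\Mmax(\delta') = \{\{M\} : M \in \Mmax(\delta)\}$ and $\delta'/\Mmax(\delta') = \delta'$ up to relabeling, so applying the inductive hypothesis to $\delta'$ yields the claim whenever $|\Mmax(\delta)| < |X|$.

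The main obstacle is the boundary case $|\Mmax(\delta)| = |X|$, where $\delta$ itself has only trivial strong modules and $\delta' = \delta$, so induction gives no reduction. Here I would argue directly: if $\delta$ has no non-trivial module, it is prime; otherwise, any non-trivial module $N$ is non-strong and therefore overlaps some other module $N'$. Picking $u \in N \cap N'$, $v \in N \setminus N'$, $w \in N' \setminus N$, the module conditions for $N$ and $N'$ already force $\delta(u,v) = \delta(v,w) = \delta(u,w)$. Propagating this equality across all of $X$ uses the module algebra (unions, intersections, and set-differences of overlapping modules are again modules) together with the standing assumption that every non-trivial module is non-strong, so that fresh overlapping modules can always be produced until label equality has been transferred to every pair. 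This propagation step is essentially the classical Gallai-type decomposition theorem for symmetric 2-structures from \cite{ER1:90,ER2:90,HSW:16} and is the technical heart of the argument.

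For the second statement, the plan is to invoke Thm.~\ref{thm:symbolic}: for a symbolic ultrametric $\delta$, the labeled MDT $(T_\delta, t_\delta)$ satisfies $\delta(x,y) = t_\delta(\lca_{T_\delta}(x,y))$ for all distinct $x,y \in X$, and the children of $\rho_{T_\delta}$ correspond precisely to the elements of $\Mmax(\delta)$. Hence for any $M, M' \in \Mmax(\delta)$ with $x \in M$, $y \in M'$, we have $\lca_{T_\delta}(x,y) = \rho_{T_\delta}$, so $\delta(x,y) = t_\delta(\rho_{T_\delta})$ is a single constant, i.e., $\delta/\Mmax(\delta)$ is complete.
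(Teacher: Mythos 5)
The paper imports this lemma by citation (ER2:90, EHPR:96, HSW:16) and does not include a proof, so there is no in-paper argument to compare against; I'll assess your sketch on its own merits.

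Your correspondence between $\mathcal{N} \subseteq \Mmax(\delta)$ being a module of $\delta'$ and $\bigcup\mathcal{N}$ being a module of $\delta$ is correct in both directions, and the follow-up argument that a non-trivial strong module $\mathcal{N}$ of $\delta'$ would lift to a strong module of $\delta$ strictly between some $M\in\Mmax(\delta)$ and $X$, contradicting maximality, is also sound. This establishes the sub-claim that $\delta'$ has only trivial strong modules. However, the induction wrapped around this is superfluous: once you know $\delta'$ has only trivial strong modules, you need only a single statement -- ``a symmetric map whose strong modules are all trivial is complete or prime'' -- applied directly to $\delta'$. Your induction merely re-routes to this same statement (your ``boundary case'') via the observation that $\Mmax(\delta')$ consists of singletons, so it buys nothing and somewhat obscures where the real work is.

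And that is where the genuine gap sits. Your triangle observation -- $u\in N\cap N'$, $v\in N\setminus N'$, $w\in N'\setminus N$ force $\delta(u,v)=\delta(v,w)=\delta(u,w)$ -- is correct and is the right starting point, and one can indeed extend it (e.g.\ $N\setminus N'$, $N'\setminus N$, $N\cap N'$, $N\cup N'$ are again modules, and the label is the same on all cross-pairs among these three parts). But the step from ``one overlapping pair produces a constant on its three-part partition'' to ``$\delta$ is constant on all of $\Xirr$'' is exactly the content of the Gallai/Ehrenfeucht--Rozenberg decomposition theorem, and your proposal stops at naming it rather than proving it. In particular it is not automatic that overlaps keep being produced in the right places: $N\cup N'$ may not be all of $X$, the parts $N\setminus N'$ etc.\ may themselves be singletons, and the argument that equality propagates \emph{across} successive overlapping families requires a careful chaining lemma. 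As written, the technical heart is asserted, not proved. If you intend a self-contained proof, this is the step that must be supplied; if you intend a citation-level sketch (as the paper itself does), you should say so explicitly rather than present it as a ``propagation step'' of your own.

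Your treatment of the second statement is correct: given Thm.~\ref{thm:symbolic}, every pair $x\in M$, $y\in M'$ with $M\neq M'$ in $\Mmax(\delta)$ has $\lca_{T_\delta}(x,y)=\rho_{T_\delta}$, so $\delta(x,y)=t_\delta(\rho_{T_\delta})$ is a single constant and $\delta/\Mmax(\delta)$ is complete. There is no circularity here, since Thm.~\ref{thm:symbolic} is established in the paper independently of this lemma. An alternative, avoiding Thm.~\ref{thm:symbolic} entirely, is to note that if $|\Mmax(\delta)|\geq 3$ and $\delta/\Mmax(\delta)$ were prime, one could pick representatives exhibiting three distinct labels on a triple and thereby violate (U2), but your route via Thm.~\ref{thm:symbolic} is cleaner given what the paper has already set up.
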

An illustrative example of the notation established above is provided in
Fig.\ \ref{fig:quotient} and \ref{fig:MDT}. We shall say that an
\emph{inner} vertex $v$ of $T_\delta$ (or, equivalently, the module $L(v)$
where $L=L_{T_\delta}$) is complete or prime if the quotient
$\delta_{|L(v)}/\Mmax(\delta_{|L(v)})$ is complete or prime, respectively.
We can now adjust the labeling function by setting
\begin{equation*}
  t_\delta(v) = 
  \begin{cases} 
    \mbox{prime, } &\mbox{if } v \mbox{ is prime}  \\
    i &\mbox{else, in which case } v=\lca_{T_{\delta}}(x,y)
    \text{ and } \delta(x,y)=i
    \mbox{ for some leaves } x,y\in L(v) 		
  \end{cases}
\end{equation*}
for all inner vertices $v\in V^0(T_\delta)$. 

\begin{theorem}{\cite[Thm.\ 3 and Prop.\ 1]{HSW:16}} 
  For a symmetric map $\delta\colon \Xirr\to\Upsilon$ the following
  statements are equivalent:
  \begin{enumerate}
  \item $\delta$ is a symbolic ultrametric.
  \item The labeled MDT $(T_\delta,t_\delta)$ of $\delta$ has no inner
    vertex $v$ labeled prime, that is, the quotient
    $\delta_{|L(v)}/\Mmax(\delta_{|(L(v)})$ is always complete where $L=X$
    denotes the leaf set of $T_\delta$.
  \end{enumerate}
  \label{thm:char-old}
\end{theorem}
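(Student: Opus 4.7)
My plan is to prove both directions of the equivalence. The forward direction, (1) $\Rightarrow$ (2), follows by combining a hereditary property of symbolic ultrametrics with the quotient lemma recalled just above. Axioms (U1) and (U2) only quantify over 3- and 4-element subsets, so for every $Y \subseteq X$ the restriction $\delta_{|Y}$ is again a symbolic ultrametric. Applied to each $Y = L(v)$ with $v$ an inner vertex of $T_\delta$, this shows that $\delta_{|L(v)}$ is a symbolic ultrametric; the quotient lemma then forces $\delta_{|L(v)}/\Mmax(\delta_{|L(v)})$ to be complete, so $v$ cannot be labeled prime.

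The reverse direction, (2) $\Rightarrow$ (1), I would prove by induction on $|X|$, with $|X| \le 3$ as trivial base cases. For the inductive step I pick the relevant three leaves $\{x,y,z\}$ (for (U2)) or four leaves $\{x,y,u,v\}$ (for (U1)) and consider the unique minimal inner vertex $w$ of $T_\delta$ whose associated module $L(w)$ contains all of them. Since $w$ is by assumption not prime, the quotient $\delta_{|L(w)}/\Mmax(\delta_{|L(w)})$ is complete, so by Lemma \ref{lem:arcs-modules} there is a single label $i \in \Upsilon$ such that $\delta(a,b) = i$ whenever $a$ and $b$ lie in distinct children $M_p, M_q \in \Mmax(\delta_{|L(w)})$ of $w$. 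For (U2), a case split on the partition of $\{x,y,z\}$ among the children of $w$ is straightforward: patterns $(1,1,1)$ and $(2,1)$ immediately yield at most two distinct values (namely $i$ and the label of the single within-child pair), while the pattern $(3)$ invokes the inductive hypothesis on the restriction to that child.

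The main obstacle is (U1), where I have to rule out a forbidden labeling of the six pairs from $\{x,y,u,v\}$ in every possible distribution among the children of $w$. If all four lie in the same child, the induction hypothesis applies. Otherwise every pair straddling two different children carries the forced value $i$, while the (U1) configuration partitions the six pairs into two triples that must receive two distinct labels $a$ and $b$. By enumerating the partitions $(3,1)$, $(2,2)$, $(2,1,1)$ and $(1,1,1,1)$ of the four leaves, together with the possible assignments of the labels $x,y,u,v$ to the parts within $(3,1)$ and $(2,2)$, one checks in each case that both triples contain at least one cross-child pair, hence $a = i$ and $b = i$, contradicting $a \ne b$. This finite bookkeeping over distribution patterns is the only technically delicate step; everything else reduces to the complete-quotient property guaranteed by hypothesis (2).
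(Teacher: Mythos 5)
The paper does not actually prove this theorem; it imports it wholesale from reference HSW:16. So I am evaluating your argument on its own merits rather than against an in-paper proof.

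Your proof is essentially correct. For (1)\,$\Rightarrow$\,(2) the combination of heredity (axioms (U1) and (U2) only quantify over 3- and 4-subsets, so every restriction $\delta_{|L(v)}$ is again a symbolic ultrametric) with the cited quotient lemma (symbolic ultrametric $\Rightarrow$ $\delta/\Mmax(\delta)$ complete) is exactly the right reduction. For (2)\,$\Rightarrow$\,(1) the core idea is sound: set $w=\lca_{T_\delta}$ of the chosen leaves, use that $w$ is complete (hence, via Lemma~\ref{lem:arcs-modules}, every pair of leaves lying in distinct children of $w$ receives a single common label $i$), and verify (U2) by counting and (U1) by checking that in every admissible distribution both complementary Hamiltonian-path triples $\{xy,yu,uv\}$ and $\{yv,xv,xu\}$ contain at least one cross-child pair. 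I verified all the partitions $(1,1,1,1)$, $(2,1,1)$, $(2,2)$ and $(3,1)$ (with all assignments of the four points to parts) and the claim holds in each case, forcing $a=i=b$ and a contradiction.

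Two small remarks. First, the induction on $|X|$ is superfluous: once you fix $w$ as the \emph{minimal} inner vertex whose module contains the chosen leaves, the "all in one child" configuration (pattern $(3)$ for (U2), or the four-in-one-child case for (U1)) simply cannot occur, so the inductive hypothesis is never invoked; you could phrase the whole reverse direction as a direct argument. Second, $|X|=3$ is not quite a \emph{vacuous} base case for (U2) --- three leaves do impose the constraint $|\{\delta(x,y),\delta(x,z),\delta(y,z)\}|\le 2$ --- but it follows by the same cross-child reasoning at the root (since the root is the lca, the three leaves cannot all lie in one child), so nothing breaks; it just shouldn't be labeled "trivial."
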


In order to infer $\delta$ from $T_\delta$ we need to determine the label
$\delta(x,y)$ of all pairs of distinct leaves $x,y$ of $T_\delta$.  In the
case of prime nodes, however, we must therefore drag the entire information
of the quotient maps. An alternative idea is to replace prime vertices by
suitable median graphs and extend the labeling function $t_\delta$ that
assigns the ``missing information'' to the inner vertex of the new graph.

\begin{definition}[prime-vertex replacement (pvr) graphs] 
\label{def:pvr}
Let $\delta\colon \Xirr\to\Upsilon$ be a symmetric map with MDT
$(T_\delta,t_\delta)$ that has leaf set $L=X$.  Denote by $\mathcal{P}$ be
the set of all prime vertices in $T_\delta$.  A
\emph{prime-vertex replacement} (\emph{pvr}) graph $(G^*, t^*)$ of
$(T_\delta,t_\delta)$ is obtained by the following procedure:
\begin{enumerate} 
\item For all $v\in \mathcal{P}$, remove all edges $\{v,u\}$ with
  $u\in \child_{T_\delta}(v)$ from $T_\delta$ to obtain the forest
  $(T',t_\delta)$. \label{step:T'}
  We note that each child
  $u\in \child_{T_\delta}(v)$ corresponds to a unique module
  $L(u) \in \Mmax(\delta_{|(L(v)})$.
\item For all $v\in \mathcal{P}$  choose a median graph $G_v$ with root $v$
  and leaf-set $L(G_v) = \{u\mid L(u)\in \Mmax(\delta_{|(L(v)})\}$  and
  labeling $t_v\colon V_{\med_{G_v},L(G_v)}\to \Upsilon$ such that
  $(G_v,t_v)$ explains $\delta_{|L(v)}/\Mmax(\delta_{|(L(v)})$ and such
  that $v\in V_{\med_{G_v},L(G_v)}$.
  \label{step:Gv}
\item For all $v\in \mathcal{P}$, add $G_v$ to $T'$ by identifying the root
  of $G_v$ with $v$ in $T'$ and each leaf $u$ of $G_v$ with the
  corresponding child $u\in \child_{T_\delta}(v)$, for all $v\in P$.  This
  results in a pvr graph $G^*$.
\item \label{step:color} Let
  $W(G^*) = V^0(T_\delta)\cup \bigcup_{v\in \mathcal P}
  V_{\med_{G_v},L(G_v)} $ be the set of vertices that either obtained a
  label in $T_\delta$ or in one of the chosen median graphs $(G_v,t_v)$.
  We define a new labeling
  $t^*\colon W(G^*)\to \Upsilon$ by putting, for all $v\in W(G^*)$,
  \begin{equation*}
    t^*(v) =
    \begin{cases} 
      t_\delta(v) &\mbox{if } v\in V^0(T_\delta)\setminus \mathcal P \\
      t_v(v) &\mbox{if } v\in \mathcal P \\
      t_w(v) &\mbox{else, i.e., } v\in W(G^*)\setminus V^0(T_\delta)
      \text{ and thus, } v \text{ is a vertex of } G_w \text{ for some } w\in P 
    \end{cases}
  \end{equation*}
\end{enumerate}
\end{definition}
We next derive some basic properties for pvr graphs that we need later in
order to show that pvr graphs can explain a given map $\delta$.

\begin{figure}[t]
  \begin{center}
    \includegraphics[width=0.5\textwidth]{./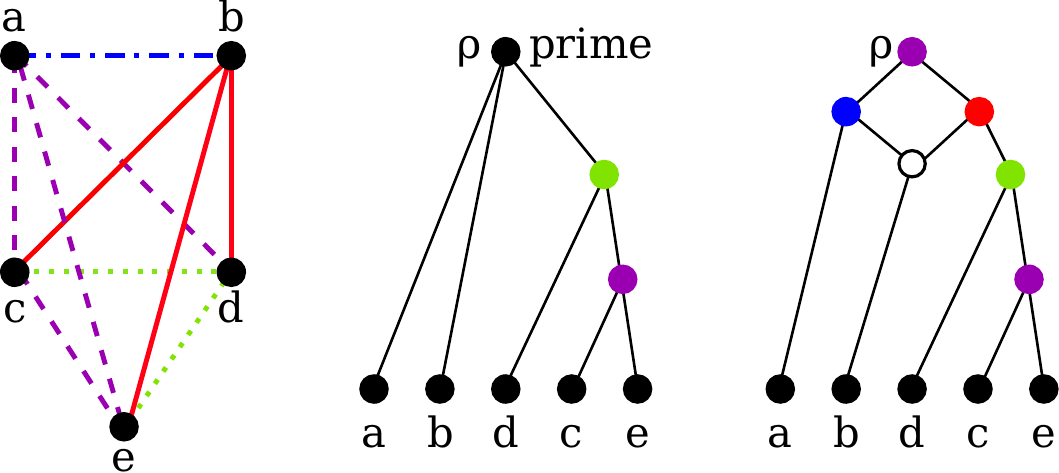}
  \end{center}
  \caption{Left: The graph-representation of the map
    $\delta\colon\Xirr\to\Upsilon$ as in Fig.\ \ref{fig:quotient}. Middle:
    The MDT $(T_\delta,t_\delta)$. Right: A pvr graph $(G^*,t^*)$ of
    $(T_\delta,t_\delta)$.  The root $\rho$ of $(T_\delta,t_\delta)$ is
    labeled ``prime'', since $L_{T_\delta}(\rho)=X$ and
    $\delta_{|X}/\Mmax(\delta_{|X}) = \delta/\Mmax(\delta)$ is prime, as
    outlined in the caption of Fig.\ \ref{fig:quotient}. The pvr graph
    $(G^*,t^*)$ is obtained from $(T_\delta,t_\delta)$ by replacing the
    root $\rho$ by the median graph $\Hext_{3}$ that explains
    $\delta_{|X}/\Mmax(\delta_{|X})$ (represented in Fig.\
    \ref{fig:quotient}, right). The labeling of the white vertex in
    $(G^*,t^*)$ can be chosen arbitrarily. }
  \label{fig:MDT}
\end{figure}

\begin{lemma}\label{lem:some-properties1}
  Let $\delta\colon \Xirr\to\Upsilon$. Then, the pvr graph $(G^*,t^*)$ of
  the MDT $(T_\delta,t_\delta)$ constructed according to Def.\
  \ref{def:pvr} is well-defined and unique up to the choice of the median
  graphs $(G_v,t_v)$ in Def.\ \ref{def:pvr}\eqref{step:Gv}.  Furthermore,
  we have $V(T_\delta)\subseteq V(G^*)$.
\end{lemma}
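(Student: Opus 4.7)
The plan is to verify each of the three assertions—(i) well-definedness of $(G^*,t^*)$, (ii) uniqueness up to the choice of the median graphs $(G_v,t_v)$ in Step~\eqref{step:Gv}, and (iii) $V(T_\delta)\subseteq V(G^*)$—by walking step-by-step through the construction in Def.~\ref{def:pvr}.

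For well-definedness, Step~\eqref{step:T'} is immediate: deleting a specified set of edges from the tree $T_\delta$ yields a unique forest $(T',t_\delta)$ with vertex set $V(T_\delta)$. For Step~\eqref{step:Gv}, I need to argue that the required median graphs actually exist. For each prime vertex $v\in\mathcal{P}$, the quotient $\delta_{|L(v)}/\Mmax(\delta_{|L(v)})$ is a symmetric map on the set $\Mmax(\delta_{|L(v)})$, which has cardinality at least~$2$. Thm.~\ref{thm:median-explains} then provides a labeled median graph $(G_v,t_v)$ with leaf set $\{u\mid L(u)\in\Mmax(\delta_{|L(v)})\}$ that explains this quotient and whose root $v$ satisfies $v\in V_{\med_{G_v},L(G_v)}$. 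Step~\eqref{step:Gv} is therefore nonempty. For Step~3, the gluing is unambiguous: the root of each $G_v$ is identified with the (unique) prime vertex $v\in V(T')$ and the leaves of $G_v$ are in one-to-one correspondence with the distinct children of $v$ in $T_\delta$, so the identification is consistent.

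For Step~\eqref{step:color}, I would check that the three cases used to define $t^*$ are mutually exclusive and cover $W(G^*)$: the sets $V^0(T_\delta)\setminus \mathcal{P}$, $\mathcal{P}$, and $W(G^*)\setminus V^0(T_\delta)$ partition $W(G^*)$ by construction. The only subtlety is that in the second case the value $t_v(v)$ must exist; this is guaranteed precisely because Step~\eqref{step:Gv} requires $v\in V_{\med_{G_v},L(G_v)}$, which is the domain of $t_v$. Hence $t^*$ is a well-defined map on $W(G^*)$.

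For uniqueness, I would observe that Steps~\eqref{step:T'}, 3, and~\eqref{step:color} are fully determined once Step~\eqref{step:Gv} has produced the family $\{(G_v,t_v)\}_{v\in\mathcal{P}}$; hence the whole construction is deterministic up to this choice. Finally, for $V(T_\delta)\subseteq V(G^*)$, Step~\eqref{step:T'} preserves every vertex of $T_\delta$ (only edges are removed), and Step~3 only adds vertices (those of the $G_v$ other than their roots and leaves), identifying the new roots and leaves with vertices already present in $V(T')=V(T_\delta)$. Thus no vertex of $T_\delta$ is ever discarded and the inclusion follows. I do not expect any substantial obstacle; the closest thing to a delicate point is the consistency of $t^*$ at the prime vertices themselves, which is handled by the $v\in V_{\med_{G_v},L(G_v)}$ condition built into Step~\eqref{step:Gv}.
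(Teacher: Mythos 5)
Your proposal is correct and follows essentially the same route as the paper's proof: walk through Def.~\ref{def:pvr} step by step, invoke Thm.~\ref{thm:median-explains} for the existence of each $(G_v,t_v)$, and use the condition $v\in V_{\med_{G_v},L(G_v)}$ to guarantee $t^*$ is defined at prime vertices. The paper is somewhat more explicit than you are on two small points that your partition argument implicitly covers: that in the third case of $t^*$ the prime vertex $w$ with $v\in V_{\med_{G_w},L(G_w)}$ is unique (because the interiors of the glued-in $G_w$'s are pairwise disjoint and disjoint from $V(T_\delta)$), and that the leaves $u\in L(G_v)$ never lie in $V_{\med_{G_v},L(G_v)}$, so there is no risk of a conflicting label coming from $t_v$ on a vertex that is already in $V(T_\delta)$.
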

\begin{proof}
  Let $(T_\delta,t_\delta)$ be the MDT of $\delta$ and let $L=X$ denote the
  leaf set of $ T_\delta$.  Let $\mathcal{P}$ be the set of all prime
  vertices in $T_\delta$.  We show first that $(G^*,t^*)$ is
  well-defined. By construction, $L\subseteq V(G^*)$. Moreover, 
  if $v\in V^0(T_\delta)$ is a non-prime vertex,
   then it still exists in
  $G^*$ and $v\in W(G^*)$. Thus,
  we can put $t^*(v)=t_\delta(v)$. This part is clearly well-defined.

  Now let $v\in V(T_\delta)$ be a prime vertex.  By definition,
  $\delta_v\coloneqq \delta_{|L(v)}/\Mmax(\delta_{|L(v)})$ consists of
  trivial modules only.  Since $\Mmax(\delta_{|L(v)})$ is a subset of
  strong modules of $\delta$ and $\delta$ is a symmetric map, any two
  distinct modules $M, M'\in \Mmax(\delta_{|L(v)})$ must be
  disjoint. Hence, Lemma \ref{lem:arcs-modules} implies that there is a
  unique label $i\in \Upsilon$ such that $\delta(x,y) = \delta(y,x) = i$
  for all $x \in M$ and $y \in M'$. By definition, we thus have
  $\delta_v(M,M') = \delta_v(M',M) = i$ for some unique label
  $i\in \Upsilon$.  Consequently, $\delta_v$ is a symmetric map and
  well-defined.  By Theorem \ref{thm:median-explains} there is a labeled
  median graph $(G_v,t_v)$ that explains $\delta_v$ and such that
  $v\in V_{\med_{G_v},L(G_v)}$.  Hence, $(G_v,t_v)$ is well defined.  Note
  that each $M\in \Mmax(\delta_{|L(v)})$ corresponds to some module $L(u)$,
  $u\in \child_{T_\delta}(v)$ and that $G_v$ has leaf set
  $L(G_v) = \child_{T_\delta}(v)$ where each child
  $u\in \child_{T_\delta}(v)$ is uniquely identified with the module
  $L(u)$.  Since $v$ is prime, the edges between the children of $v$ and
  vertex $v$ are removed and we add the median graph $G_v$ with leaf-set
  $L(G_v) = \child_{T_\delta}(v)$ by identifying its root with $v$ and
  every $u\in L(G_v)$ with the unique child $u\in \child_{T_\delta}(v)$ in
  $T'$.  As this step is uniquely determined (up to the choice of $G_v$)
  and applied precisely once to prime vertices $v$, we can conclude that
  $G^*$ is well-defined.

  These arguments in particular imply $V(T_\delta)\subseteq V(G^*)$.

  We continue to show that the labeling $t^*$ is well-defined.  As argued
  above, $t^*(v)=t_\delta(v)$ is well-defined for non-prime vertices $v$ of
  $T_\delta$.  Moreover, since $t_v$ is a map from $V_{\med_{G_v},L(G_v)}$
  to $\Upsilon$ and since $v\in V_{\med_{G_v},L(G_v)}$, the assignment
  $t^*(v)=t_v(v)$ is well defined for all $v\in \mathcal P$.  Note, none of
  the leaves $u\in L(G_v)$ are contained in $V_{\med_{G_v},L(G_v)}$ and
  thus, do not obtain a label $t_v(u)$. By construction, $u\in L(G_v)$
  implies that $u\in V(T_\delta)$. Hence, $u\in L(G_v)$ obtains either the
  unique label $t^*(u)=t_\delta(u)$ if $u\notin \mathcal{P}$ or
  $t^*(u)=t_u(u)$ if $u \in \mathcal{P}$.  In summary, for all vertices
  $V(T_\delta)$ the labeling $t^*$ is well-defined.  Now let
  $v\in W(G^*)\setminus V^0(T_\delta)$. In this case, there is a vertex $w$
  distinct from $v$ such that $v\in V(G_w)$ and $G_w$ is the median graph
  chosen in Step \ref{step:Gv}.  Now, $G_w$ has root $w$.  Since
  $v\in W(G^*)\setminus V^0(T_\delta)$ we have $w\in V_{\med_{G_w},L(G_w)}$
  by construction, and therefore $v$ is labeled by $t_w(v)$.  By
  construction of $t^*$, we have $t^*(v) = t_w(v)$, which is
  well-defined. In summary, therefore, $t^*$ is well-defined.
\end{proof}
  
\begin{lemma}\label{lem:some-properties2}
  Let $\delta\colon \Xirr\to\Upsilon$ be a map and let $(G^*,t^*)$ be a pvr
  graph of the MDT $(T_\delta,t_\delta)$.  If $u,v\in V(T_\delta)$ such
  that $u\preceq_{T_\delta}v$, then $u\preceq_{G^*} v$ and the vertices on
  the (unique) shortest path $P_{T_\delta}(u,v)$ in $T_\delta$ are
  contained in the vertex set of every path $P_{G^*}(u,v)$ in $G^*$.
\end{lemma}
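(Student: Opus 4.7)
The plan is to parametrize along the (unique) tree path
$P_{T_\delta}(u,v)=w_0 w_1\dots w_k$ with $w_0=v$, $w_k=u$, and $w_{i+1}$ a
child of $w_i$ in $T_\delta$. I would dispatch the ordering assertion
$u\preceq_{G^*}v$ first and then argue the path-containment by a
cut-vertex analysis on $G^*$.

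For the ordering assertion, note that the pvr construction in
Def.~\ref{def:pvr} does not move the root, so
$\rho_{G^*}=\rho_{T_\delta}$. Since $u\preceq_{T_\delta}v$, the tree path
$P_{T_\delta}(\rho_{T_\delta},u)$ contains $v$. I would lift this tree
path to a walk in $G^*$ by keeping every tree edge $\{z,z'\}$ whose upper
endpoint $z$ is non-prime (these edges survive Step~\ref{step:T'}) and,
whenever $z$ is prime, replacing $\{z,z'\}$ by any path in the connected
median graph $G_z$ from its root $z$ to the leaf $z'$ (available by
Step~\ref{step:Gv}). Concatenation yields a walk in $G^*$ from
$\rho_{G^*}$ to $u$ passing through $v$, establishing $u\preceq_{G^*}v$.

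For the second assertion, I would show that each $w_i$ with $i\ge 1$ is a
cut vertex whose removal separates $u$ from $v$ in $G^*$. Introduce the
\emph{descendant region}
\[
  R(w_i) \coloneqq
    \{z\in V(T_\delta) : z\preceq_{T_\delta} w_i\}
    \;\cup\;
    \bigcup_{\substack{z\in\mathcal{P}\\ z\preceq_{T_\delta} w_i}} V(G_z),
\]
and verify that the only vertex of $R(w_i)$ having a neighbor in
$V(G^*)\setminus R(w_i)$ is $w_i$ itself. This is a short case
distinction on the type of $x\in R(w_i)\setminus\{w_i\}$: if
$x\in V(T_\delta)$ then by Def.~\ref{def:pvr} its neighbors in $G^*$ are
either children of $x$ in $T_\delta$, the unique neighbor of $x$ viewed
as a leaf of $G_p$ for a prime parent $p$, or the neighbors of
$x=\rho_{G_x}$ inside $G_x$ when $x$ is itself prime; and if
$x\in V(G_z)\setminus V(T_\delta)$ for some prime
$z\preceq_{T_\delta}w_i$, then by construction all neighbors of $x$ lie
in $V(G_z)$. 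In every case the neighbors remain in $R(w_i)$.

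Because $u\preceq_{T_\delta} w_i$ gives $u\in R(w_i)$, while $v$ is a
strict $T_\delta$-ancestor of $w_i$ (so $v$ is neither in
$V(T_\delta)\cap R(w_i)$ nor an internal $G_z$-vertex) we have
$v\notin R(w_i)$. Hence every path in $G^*$ from $u$ to $v$ must visit
$w_i$. Combined with $v=w_0$ being an endpoint, every $w_i$ appears on
every $P_{G^*}(u,v)$. The only genuine obstacle is the cut-vertex case
analysis for $R(w_i)$; one must carefully use the fact that an internal
vertex of a $G_z$ has, by construction, no edges outside $V(G_z)$, and
that only prime ancestors of $w_i$ contribute auxiliary vertices to
$R(w_i)$.
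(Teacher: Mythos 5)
Your proof is correct, and the first part (the ordering assertion $u\preceq_{G^*}v$) proceeds exactly as in the paper: lift the tree path from $\rho$ through $v$ to $u$ into $G^*$ by keeping surviving non-prime edges and expanding each edge below a prime vertex $z$ into a root-to-leaf path in $G_z$. For the second part, however, you take a genuinely different and tighter route. The paper argues edge by edge along $P_{T_\delta}(u,v)$, asserting that every $P_{G^*}(u,v)$ must contain the surviving edge $\{a,b\}$ (or, if $a$ is prime, a root-to-leaf subpath of $G_a$) ``since $a$ is the unique last ancestor of $b$''; this is the right intuition, but the separation property it silently invokes is not spelled out. You replace it with an explicit cut-vertex argument: you define the descendant region $R(w_i)$ and show by a neighbor-by-neighbor case analysis that $w_i$ is the only vertex of $R(w_i)$ adjacent to $V(G^*)\setminus R(w_i)$, so that $u\in R(w_i)$ and $v\notin R(w_i)$ force every $u$--$v$ path through $w_i$. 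This makes the argument self-contained and fills the gap the paper leaves implicit. One small slip: your closing sentence says ``only prime ancestors of $w_i$ contribute auxiliary vertices to $R(w_i)$,'' whereas your own definition correctly takes $V(G_z)$ over prime \emph{descendants} $z\preceq_{T_\delta}w_i$ (including $w_i$ itself when prime); as written the sentence contradicts the definition and should be corrected, but the underlying argument is sound.
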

\begin{proof}
  If $u=v$ in $T_\delta$, then we can apply Lemma
  \ref{lem:some-properties2} to conclude that $V(T_\delta)\subseteq V(G^*)$
  and thus, $u=v$ in $G^*$.  In this case, the path $P_{T_\delta}(u,v)$ in
  $T_\delta$ consists of $u$ only and so, $P_{G^*}(u,v)$ does.  Hence, we
  assume in the following that $u,v\in V(T_\delta)$ are chosen such that
  $u\prec_{T_\delta} v$.

  Assume that $\{u,v\}\in E(T_\delta)$. If $v$ is not a prime vertex, then
  this edge $\{u,v\}$ also exists in $G^*$, by construction.  Thus
  $u\prec_{G^*} v$.  Otherwise, if $v$ is a prime vertex it is replaced by
  a median graph $G_v$ with root $v$ and $u\in L(G_v)$. That is,
  $u\prec_{G_v} v$ and, by construction, $u\prec_{G^*} v$.
  
  Assume now that $\{u,v\}\notin E(T_\delta)$ and consider the unique path
  $P_{T_{\delta}}(u,v)$. By analogous arguments, $b\prec_{G^*} a$ for every
  edge $\{a,b\}$ in the path $P_{T_{\delta}}(a,b)$ with
  $b\prec_{T_\delta} a$.  By induction on the number of edges, we thus
  conclude that $u\prec_{G^*} v$.

  It remains to show that the vertices in the unique shortest
  $P_{T_\delta}(u,v)$ in $T_\delta$ are contained in the vertex set of
  every path $P_{G^*}(u,v)$ in $G^*$.  By Lemma \ref{lem:some-properties1},
  the vertices in $P_{T_\delta}(u,v)$ are contained in $V(G^*)$.  Let
  $\{a,b\}$ be an edge in $P_{T_\delta}(u,v)$ with $b\prec_{T_\delta} a$.
  As argued above, if $a$ is not prime, then $\{a,b\}$ is an edge in $G^*$
  and, otherwise, we still have $b\prec_{G^*} a$.  Hence, if $a$ is not
  prime, then $P_{G^*}(u,v)$ must contain the edge $\{a,b\}$ (since $a$ is
  the unique last ancestor of $b$) and, if $a$ is prime, then
  $P_{G^*}(u,v)$ must contain a subpath from $a$ to $b$ in ${G^*}$ that
  starts at the root $a$ of $G_a$ and ends in the leaf $b$ of $G_a$. In
  summary, $P_{G^*}(u,v)$ either still contains the edge $\{a,b\}$ or a
  path from $a$ to $b$ in $G^*$. Hence, the vertices $a$ and $b$ are
  contained $P_{G^*}(u,v)$. Since the choice of the edge in
  $P_{T_\delta}(u,v)$ was arbitrary, all vertices of $P_{T_\delta}(u,v)$
  are contained in the vertex set of $P_{G^*}(u,v)$. Since these arguments
  apply to all paths $P_{G^*}(u,v)$, the statement follows.
\end{proof}

\begin{lemma}\label{lem:some-properties3}
  Let $\delta\colon \Xirr\to\Upsilon$ be a map and let $(G^*,t^*)$ be a pvr
  graph of the MDT $(T_\delta,t_\delta)$. Let $x,y\in X$ be distinct and
  denote by $v_x$ and $v_y$ the two children of
  $v\coloneqq\lca_{T_\delta}(x,y)$ with $x\preceq_{T_\delta}v_x$ and
  $y\preceq_{T_\delta} v_y$, respectively. Then, the following two
  statement are true:
  \begin{itemize}
  \item[(i)] $v_x,v_y\in  P_{G^*}(x,y)$.
  \item[(ii)] $\med_{G^*}(\rho,x,y)= \med_{G^*}(v,v_x,v_y)$. Moreover,
    $\med_{G^*}(v,v_x,v_y) = \med_{G_v}(v,v_x,v_y)$ in case $v$ is prime
    and $G_v$ is chosen in Step \eqref{step:Gv} in Def.\ \ref{def:pvr}.
  \end{itemize}
\end{lemma}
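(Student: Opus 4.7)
\medskip

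\noindent\textbf{Proof plan.}
For (i), I plan to establish that $v_x$ is a cut vertex of $G^*$ separating $x$ from $y$, and symmetrically for $v_y$. Write $S_x$ for the subgraph of $G^*$ consisting of $v_x$ and all vertices introduced ``beneath'' $v_x$ by the pvr construction on the subtree of $T_\delta$ rooted at $v_x$. The key observation is that the only edges of $G^*$ leaving $S_x$ are incident to $v_x$: if $v$ is non-prime, Step~\ref{step:T'} of Def.~\ref{def:pvr} preserves the tree edge $\{v_x,v\}$ as the unique exit edge; if $v$ is prime, that edge is removed but $v_x$ is identified with a leaf of $G_v$, and its only external edges are those connecting $v_x$ to its neighbour in $G_v$. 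Since $x\preceq_{T_\delta}v_x$ places $x$ in $S_x$ while $y\preceq_{T_\delta}v_y$ with $v_y\neq v_x$ places $y$ outside $S_x$, every $x$-to-$y$ path in $G^*$ must cross $v_x$; the symmetric argument yields the statement for $v_y$.

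For (ii), I first derive the three distance decompositions
\begin{align*}
d_{G^*}(\rho,x) &= d_{G^*}(\rho,v)+d_{G^*}(v,v_x)+d_{G^*}(v_x,x), \\
d_{G^*}(\rho,y) &= d_{G^*}(\rho,v)+d_{G^*}(v,v_y)+d_{G^*}(v_y,y), \\
d_{G^*}(x,y)    &= d_{G^*}(x,v_x)+d_{G^*}(v_x,v_y)+d_{G^*}(v_y,y),
\end{align*}
which are immediate consequences of (i) together with Lemma~\ref{lem:some-properties2}: each of $v$, $v_x$, $v_y$ lies on every path between the relevant endpoints and hence on every shortest path between them. Then I split on whether $v$ is prime. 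In the non-prime case the edges $\{v,v_x\}$ and $\{v,v_y\}$ survive in $G^*$, and the cut-vertex argument applied at $v$ forces $d_{G^*}(v_x,v_y)=2$ with $v$ as the unique intermediate vertex. A routine verification using the three displayed identities then shows that $v$ coincides with both $\med_{G^*}(\rho,x,y)$ and $\med_{G^*}(v,v_x,v_y)$.

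In the prime case, $v,v_x,v_y$ all lie in $G_v$ (as the root and two distinct leaves). The central observation is a gatedness property: every shortest path in $G^*$ between two vertices of $G_v$ stays inside $G_v$. Indeed, internal vertices of $G_v$ (those created in Step~\ref{step:Gv} and not contained in $V(T_\delta)$) have no external neighbours, while leaving $G_v$ through $v$ (upward into the remainder of $G^*$) or through any leaf of $G_v$ (downward into the associated pvr-subgraph) would force a later return through the same cut vertex, wasting at least two edges. Hence distances and geodesic intervals between $v,v_x,v_y$ coincide in $G_v$ and in $G^*$, which already yields the ``moreover'' clause $\med_{G^*}(v,v_x,v_y)=\med_{G_v}(v,v_x,v_y)=:m$. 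Combining the three distance decompositions with the median identities of $m$ inside $G_v$ (such as $d_{G_v}(v,m)+d_{G_v}(m,v_x)=d_{G_v}(v,v_x)$) yields $d_{G^*}(\rho,m)+d_{G^*}(m,x)=d_{G^*}(\rho,x)$ and the two analogous equalities, so $m$ is a median of $(\rho,x,y)$ in $G^*$; uniqueness of medians then gives $m=\med_{G^*}(\rho,x,y)$.

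The main obstacle is the gatedness statement: it requires carefully distinguishing the three types of $G_v$-vertices (internal vertex, root $v$, leaf in $L(G_v)$) and showing that none of them admits a genuine shortcut via the rest of $G^*$. Once this is in place, the distance decompositions and the subsequent median identities follow essentially by bookkeeping.
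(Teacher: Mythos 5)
Your part (i) via cut vertices is essentially the same argument as the paper's path-containment proof. For part (ii) the route differs. The paper proceeds via interval containments: using Lemma~\ref{lem:some-properties2} and part (i), the vertices $v,v_x$ lie on every $\rho$--$x$ path, $v,v_y$ on every $\rho$--$y$ path, and $v_x,v_y$ on every $x$--$y$ path, which yields $I_{G^*}(v,v_x)\subseteq I_{G^*}(\rho,x)$, $I_{G^*}(v,v_y)\subseteq I_{G^*}(\rho,y)$, and $I_{G^*}(v_x,v_y)\subseteq I_{G^*}(x,y)$. Intersecting gives a containment of two triple intervals, each of which is a singleton because $G^*$ is a median graph, hence they are equal; this avoids splitting on whether $v$ is prime. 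Your distance-decomposition argument reaches the same conclusion with more explicit bookkeeping and does need the case split. What your proposal genuinely adds is the gatedness lemma (shortest paths in $G^*$ between vertices of $G_v$ stay inside $G_v$), which makes precise why $\med_{G^*}(v,v_x,v_y)=\med_{G_v}(v,v_x,v_y)$; the paper disposes of this with a bare ``by construction.'' Your justification that internal vertices of $G_v$ have no external neighbours, while any excursion through the root or a leaf of $G_v$ must return via the same cut vertex, correctly supplies the isometric-embedding fact that the paper's last sentence tacitly assumes.
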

\begin{proof}
  (i) If $v =\lca_{T_\delta}(x,y)$ is not a prime vertex, then every path
  between $v$ and $x$ in $G^*$ contains $v_x$ while every path between $v$
  and $y$ in $G^*$ contains $v_y$.  Moreover, every path between $x$ and
  $y$ in $G^*$ must contain vertex $v$. Since every path between $v$ and
  $x$ in $G^*$ as well as between $v$ and $y$ in $G^*$ is a subpath of some
  path between $x$ and $y$, the vertices $v_x$ and $v_y$ are contained in
  $P_{G^*}(x,y)$.  If $v$ is a prime vertex, then it is replaced by a
  median graph $G_v$ and, by construction $v_x$ and $v_y$ are leaves in
  $G_v$. Hence, by construction, $v_x$ and $v_y$ are
  incomparable. Moreover, by Lemma~\ref{lem:some-properties2} we have
  $x\preceq_{G^*}v_x$ and $y\preceq_{G^*}v_y$.  Note, by construction,
  there is no vertex that is incomparable to $v_x$ (resp., $v_y$) that is
  also an ancestor of $x$ (resp., $y$).  Consequently, $v_x$ and $v_y$ are
  contained in $P_{G^*}(x,y)$.

  (ii) Application of Lemma~\ref{lem:some-properties2} implies that
  $v,v_x\in P_{T_\delta}(\rho,x)\subseteq P_{G*}(\rho,x)$ and
  $v,v_y\in P_{T_\delta}(\rho,y)\subseteq P_{G^*}(\rho,y)$. Moreover, (i)
  implies $v_x,v_y\in P_{T_\delta}(x,y)\subseteq P_{G^*}(x,y)$.  Hence,
  $I_{G^*}(v,v_x)\subseteq I_{G^*}(\rho,x)$,
  $I_{G^*}(v,v_y)\subseteq I_{G^*}(\rho,y)$ and
  $I_{G^*}(v_x,v_y)\subseteq I_{G^*}(x,y)$.  Therefore,
  $I_{G^*}(v,v_x)\cap I_{G^*}(v,v_y)\cap I_{G^*}(v_x,v_y)\subseteq
  I_{G^*}(\rho,x)\cap I_{G^*}(\rho,y)\cap I_{G^*}(x,y)$. Since $G^*$ is a
  median graph, both intersections have precisely one element namely
  $\med_{G^*}(v,v_x,v_y)$ and $\med_{G^*}(\rho,x,y)$, which therefore are
  identical.  By construction, furthermore, we also have
  $\med_{G^*}(v,v_x,v_y) = \med_{G_v}(v,v_x,v_y)$ in case $v$ is prime.
\end{proof}

\begin{theorem}
  Let $\delta$ be a symmetric map with MDT $(T_\delta,t_\delta)$ and let
  $(G^*,t^*)$ be a pvr graph for $(T_\delta,t_\delta)$.  Then, $(G^*,t^*)$
  is a median graph that explains $\delta$.
\label{thm:pvr}
\end{theorem}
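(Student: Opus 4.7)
The plan is to establish two claims: (a) $G^*$ is a median graph, and (b) $t^*(\med_{G^*}(\rho,x,y)) = \delta(x,y)$ for every pair of distinct $x,y \in X$, so that $(G^*,t^*)$ explains $\delta$.

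For (a), I observe that $G^*$ is obtained by gluing median graphs at single common vertices. After Step \ref{step:T'}, the forest $T'$ decomposes into tree components, each of which is a median graph; each $G_v$ chosen in Step \ref{step:Gv} is also a median graph. Step \ref{step:Gv} then merges each $G_v$ into $T'$ by identifying the root of $G_v$ with $v$ and each leaf of $G_v$ with the corresponding child of $v$ in $T_\delta$. Each such identification fuses two disjoint subgraphs at a single vertex, which becomes a cut vertex of $G^*$. I would invoke (and if necessary reprove) the vertex-identification counterpart of Lemma \ref{lem:leaf-append}: amalgamating two median graphs at a single cut vertex produces a median graph, since any triple of vertices either lies entirely on one side of the cut vertex (and inherits its unique median there) or straddles the cut vertex (in which case the cut vertex itself is the unique median). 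Iterating over all prime vertices shows that $G^*$ is a median graph.

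For (b), fix distinct $x,y \in X$, let $v = \lca_{T_\delta}(x,y)$, and let $v_x,v_y$ denote the children of $v$ with $x \preceq_{T_\delta} v_x$ and $y \preceq_{T_\delta} v_y$. Lemma \ref{lem:some-properties3}(ii) reduces the task to determining $\med_{G^*}(v,v_x,v_y)$ and its $t^*$-label. If $v$ is non-prime, the edges $\{v,v_x\}$ and $\{v,v_y\}$ survive in $G^*$; the bipartiteness of median graphs forces $v_x$ and $v_y$ to be non-adjacent, so $d_{G^*}(v_x,v_y)=2$ through $v$, which yields $\med_{G^*}(v,v_x,v_y)=v$. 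Because completeness of $\delta_{|L(v)}/\Mmax(\delta_{|L(v)})$ makes $\delta(x',y')$ constant for all $x' \in L(v_x)$ and $y' \in L(v_y)$, the definitions of $t^*$ and $t_\delta$ give $t^*(v) = t_\delta(v) = \delta(x,y)$. If $v$ is prime, Lemma \ref{lem:some-properties3}(ii) additionally yields $\med_{G^*}(v,v_x,v_y) = \med_{G_v}(v,v_x,v_y)$, a vertex lying in $V_{\med_{G_v},L(G_v)}$ since $v_x,v_y$ are leaves of $G_v$ and $v$ is its root. By Step \ref{step:color} its $t^*$-label equals $t_v(\med_{G_v}(v,v_x,v_y))$; since $(G_v,t_v)$ explains $\delta_v = \delta_{|L(v)}/\Mmax(\delta_{|L(v)})$, this label equals $\delta_v(v_x,v_y)$, which in turn equals $\delta(x,y)$ by the definition of the quotient map together with Lemma \ref{lem:arcs-modules}.

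The main obstacle is the gluing claim in (a): the paper only states the bridge-edge version (Lemma \ref{lem:leaf-append}), while the pvr construction performs vertex identification. The cleanest resolution is a short direct argument showing that single-vertex amalgams of median graphs are median graphs; alternatively one could invoke the block-decomposition theory of median graphs, in which each block is a median subgraph glued to the rest at cut vertices. Once this medianness of cut-vertex amalgams is in place, both (a) and the case analysis in (b) go through essentially by unfolding the definitions.
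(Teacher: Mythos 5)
Your overall plan matches the paper's (medianness of $G^*$, then label-correctness via Lemma~\ref{lem:some-properties3}(ii)), and part~(b) is essentially the paper's argument in both cases. For part~(a), however, you take a genuinely different route. The paper writes $G^*$ as a ``disjoint union'' $H$ of the components of $T'$ and of the graphs $G_v - L(G_v)$, and then adds the remaining edges one by one, invoking the bridge version of the gluing operation (Lemma~\ref{lem:leaf-append}). You instead decompose the construction as a sequence of single-vertex amalgamations and argue that a cut-vertex amalgam of two median graphs is median. You correctly put your finger on the fact that Step~\ref{step:Gv}/\ref{step:T'} identifies the root of $G_v$ with $v$ (which simultaneously lives in a component of $T'$) rather than joining them by a bridge, so the bridge-only Lemma~\ref{lem:leaf-append} does not literally cover the gluing at $v$; a fully rigorous version of the paper's decomposition would need to excise the prime vertex $v$ from $T'$ (it has degree at most one there) so that the pieces really are vertex-disjoint and all glue edges are bridges, or else fall back on exactly the cut-vertex amalgamation fact you propose. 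What your route buys is that it tracks the actual construction step by step; what the paper's route buys is that it never needs more than the leaf-append lemma once the decomposition is set up carefully.

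One substantive error in your sketch for~(a): for a triple that ``straddles'' the cut vertex, the unique median is \emph{not} in general the cut vertex. If $c$ is the cut vertex, $u$ lies on one side and $w,z$ on the other, then every $u$--$w$ and $u$--$z$ geodesic passes through $c$, so $I_G(u,w)\cap I_G(u,z)\cap I_G(w,z)$ reduces to $I_{G_2}(c,w)\cap I_{G_2}(c,z)\cap I_{G_2}(w,z)=\{\med_{G_2}(c,w,z)\}$, which need not equal $c$ (take $G=a\text{--}b\text{--}c\text{--}d\text{--}e$ with cut vertex $c$ and triple $(a,d,e)$: the median is $d$, not $c$). The conclusion that the amalgam is median is still correct, but you should state the median as $\med$ of the cut vertex with the pair on the far side rather than as the cut vertex itself. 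With that correction, and a one-line remark that the amalgamations can be performed one at a time because the graphs being merged intersect in a single vertex, your argument for~(a) is sound; part~(b) needs no changes.
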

\begin{proof}
  Let $\mathcal{P}$ be the set of all prime vertices in $T_\delta$.  We
  show first that $G^*$ is a median graph.  To this end, consider the graph
  $G_v-L(G_v)$ obtained from the labeled median graph $(G_v,t_v)$
  constructed in Step \eqref{step:Gv} in Def.\ \ref{def:pvr}. By Lemma
  \ref{lem:leaf-append}, $G_v-L(G_v)$ remains a median graph.  We denote
  with $H$ the graph that is the disjoint union of the graphs $G_v-L(G_v)$
  (for all prime vertices $v$) and the connected components of the forest
  $T'$ obtained in Step \eqref{step:T'} in Def.\ \ref{def:pvr}.  Note, all
  connected components of $H$ are median graphs and, in particular, $H$ is
  a spanning subgraph of $G^*$.  It is an easy task to verify that every
  edge $e\in E(G^*)\setminus E(H)$ connects precisely two connected
  components of $H$. Moreover, all edges $e\in E(G^*)\setminus E(H)$ that
  are added to obtain $G^*$ are only edges that are already contained in
  $T_\delta$. In other words, stepwise addition of edges
  $e\in E(G^*)\setminus E(H)$ cannot create new cycles in $G^*$. Hence,
  whenever we have added a proper subset $F\subset E(G^*)\setminus E(H)$ of
  edges to $H$ and take a further edge $f\in E(G^*)\setminus (E(H)\cup F)$
  then it must connect again two connected components of the graph
  $(V(G^*), E(H)\cup F)$. By induction and Lemma \ref{lem:leaf-append}, the
  connected components that are joined by a new edge $f$ are again median
  graphs.  As a consequence, $G^*$ is a median graph.

  Let $\rho$ be the root of $G^*$.  It remains to show that $(G^*,t^*)$
  explains $\delta$. Recall that we have $L(G^*)=X$ by construction. In
  order to verify that $\delta(x,y) = t^*(\med_{G^*}(\rho,x,y))$ for all
  distinct $x,y\in X$ we consider the following cases: Either
  $t_\delta(\lca_{T_\delta}(x,y)) = i$ for some $i\in \Upsilon$ or
  $t_\delta(\lca_{T_\delta}(x,y)) = \text{prime}$.  Set
  $v\coloneqq \lca_{T_\delta}(x,y)$ and denote by $v_x$ and $v_y$ the two
  children of $v$ with $x\preceq_{T_\delta} v_x$ and
  $y\preceq_{T_\delta} v_y$.  Note, $v_x$ and $v_y$ are incomparable in
  ${T_\delta}$ since $v = \lca_{T_\delta}(x,y)$.  By Lemma
  \ref{lem:some-properties2}, the vertices $v_x$ and $v_y$ still exist in
  $G^*$ and are, by construction still incomparable in $G^*$.

  Suppose first $t_\delta(v) = i$ for some $i\in \Upsilon$ in which case
  $\delta(x,y)=i$. In this case, $v$ is not a prime vertex and, by
  construction, its two children in $G^*$ are still $v_x$ and $v_y$. By
  Lemma \ref{lem:some-properties2}, $v$ is contained in the shortest paths
  $P_{G^*}(\rho,x)$ as well as $P_{G^*}(\rho,y)$. Moreover, by Lemma
  \ref{lem:some-properties3}(i), $P_{G^*}(x,y)$ contains $v_x$ and $v_y$.
  Since $v_x$ and $v_y$ have only vertex $v$ as common adjacent vertex and
  since any path connecting $v_x$ and $v_y$ contains $v$, we can conclude
  that $P_{G^*}(x,y)$ must contain $v$. Since $G^*$ is a median graph this
  implies that $\med_{G^*}(\rho,x,y) = v$.  Since $v$ is not a prime
  vertex, we have, by construction,
  $t^*(\med_{G^*}(\rho,x,y))= t_{\delta}(v) = \delta(x,y)$.

  Now assume that $v$ is prime.  In this case, $v$ has been replaced by the
  median graph $G_v$ according to Def.\ \ref{def:pvr}\eqref{step:Gv}.  In
  particular, $(G_v,t_v)$ explains
  $\delta_v\coloneqq\delta_{|L(v)}/\Mmax(\delta_{|(L(v)})$. Note, for each
  child $u\in \child_{T_\delta}(v)$, the set $L(u)$ is a module in
  $\Mmax(\delta_{|L(v)})$.  Since $v$ is prime, $\delta_v$ consists of
  trivial modules only, that is, every $L(u)\in \Mmax(\delta_{|(L(v)})$
  forms a trivial module $M=\{L(u)\}$ in $\M(\delta_v)$. This and
  definition of $\delta_v$ implies that $\delta_v$ maps pairs $(M,M')$ of
  distinct trivial modules to some label in $\Upsilon$.  By definition of
  $L(G_v)$, the leaves $v_x$ and $v_y$ are contained in $L(G_v)$ and
  represent the trivial modules $\{L(v_x)\}$ and $\{L(v_y)\}$ of
  $\delta_v$. Since $L(v_x),L(v_y) \in \Mmax(\delta_{|(L(v)})$, they are
  disjoint and hence, Lemma \ref{lem:arcs-modules} implies that there is a
  unique label $i\in \Upsilon$ such that such that $\delta(a,b)=i$ for all
  $a\in L(v_x)$ and all $b\in L(v_y)$.  Therefore we have, by definition,
  $\delta_v(\{L(v_x)\},\{L(v_y\})) = i = \delta(x,y)$.  Since $v_x$ and
  $v_y$ are contained in $L(G_v)$ and represent the trivial modules
  $\{L(v_x)\}$ and $\{L(v_y)\}$ of $\delta_v$, and since $(G_v,t_v)$
  explains $\delta_v$, we have
  $t_v(\med_{G_v}(\rho, v_x,v_y)) = \delta(x,y)$.  By Lemma
  \ref{lem:some-properties3}(ii),
  $\med_{G^*}(\rho,x,y) = \med_{G_v}(v,v_x,v_y) =\med_{G^*}(v,v_x,v_y)$.
  This and Def.\ \ref{def:pvr}\eqref{step:color} implies
  $t^*(\med_{G^*}(\rho,x,y))= t_v(\med_{G_v}(\rho, v_x,v_y)) =
  \delta(x,y)$.
\end{proof}

As a direct consequence of Thm.~\ref{thm:pvr} we obtain a practical
algorithm to compute a pvr graph for a given map $\delta$ that is linear in
the size of the input.

\begin{algorithm}[t]
  \caption{Construction of a labeled median graph that explains a given
    symmetric map.}
  \label{alg:pvr}
\begin{algorithmic}[1]
\Require symmetric map $\delta\colon    \Xirr\to\Upsilon$
\Ensure pvr graph $(G^*,t^*)$ that explains $\delta$
\State Compute MDT $(T_\delta, t_\delta)$
\State $(G^*,t^*)\gets (T_\delta, t_\delta)$
\State $\mathcal{P}\gets$ set of prime vertices in $(T_\delta, t_\delta)$
\For{all $v\in \mathcal{P}$}
   \State $n_v\gets|\child_{T_\delta}(v)|$
   \State $(G^*,t^*) \gets$ graph obtained from $(G^*,t^*)$ by
   replacing  $v$ by $(G_v,t_v)$ where $G_v \simeq \Hext_{n_v}$
   (cf.\ Def.\ \ref{def:pvr})
\EndFor
\State \Return  $(G^*,t^*)$
\end{algorithmic}
\end{algorithm}

\begin{theorem}
  Algorithm \ref{alg:pvr} correctly computes a labeled median graph $(G,t)$
  that explains a given symmetric map $\delta\colon \Xirr\to\Upsilon$ in
  $O(m)$ time, where the input size is $m=|X^2|$.
\end{theorem}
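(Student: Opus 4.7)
The plan is to separate correctness from the running-time bound. Correctness is essentially already in hand: Algorithm~\ref{alg:pvr} realises exactly the construction of Def.~\ref{def:pvr} with the specific choice $(G_v,t_v)\simeq(\Hext_{n_v},t_v)$ in Step~\eqref{step:Gv}. By Thm.~\ref{thm:median-explains} together with Prop.~\ref{prop:ext-hg-median} and Cor.~\ref{cor:unique-med}, the extended half-grid $\Hext_{n_v}$ is a median graph with leaf set in bijection with $\Mmax(\delta_{|L(v)})$, containing its root in $V_{\med_{G_v},L(G_v)}$, and admitting a labeling that explains the quotient $\delta_{|L(v)}/\Mmax(\delta_{|L(v)})$. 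Hence the output $(G^*,t^*)$ is a legitimate pvr graph, and Thm.~\ref{thm:pvr} immediately gives that it is a median graph explaining $\delta$.

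For the running time the plan is to bound each line of the algorithm. I would first appeal to the classical linear-time modular-decomposition algorithms (e.g.\ \cite{McConnell:95,habib2010survey,EHMS:94}) to build $(T_\delta,t_\delta)$ in $O(|X|^2)=O(m)$ time from the $|\Xirr|=|X|(|X|-1)$ values of $\delta$; note that the input is already of size $m$, so this step is optimal. Computing the set $\mathcal{P}$ of prime nodes and the numbers $n_v=|\child_{T_\delta}(v)|$ can be done in one traversal of $T_\delta$ in $O(|X|)$ time since $T_\delta$ has at most $2|X|-1$ vertices.

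The main step, and the only one requiring a short argument, is the total cost of the replacement loop. Constructing $\Hext_{n_v}$ together with its labeling costs $O(n_v^2)$ since $\Hext_{n_v}$ has $\Theta(n_v^2)$ vertices and edges and, by Prop.~\ref{prop:ext-hg-median}, every label $t^*(\med(\rho,x_i,x_j)) = \delta(x_i,x_j) $ can be assigned in $O(1)$ via the coordinate formula $\med(\rho,x_i,x_j)=(i,j-1)$. It therefore suffices to bound $\sum_{v\in\mathcal P} n_v^2$. Since every non-root vertex of $T_\delta$ is a child of exactly one inner vertex and $|V(T_\delta)|\le 2|X|-1$, one has $\sum_{v\in V^0(T_\delta)} n_v \le 2|X|-2$; combined with $n_v\le |X|$ this yields
\[
\sum_{v\in\mathcal P} n_v^2 \;\le\; |X|\sum_{v\in\mathcal P} n_v \;\le\; |X|\cdot (2|X|-2) \;=\; O(|X|^2) \;=\; O(m).
\]
Adding the three contributions gives the desired $O(m)$ bound. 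I expect the only delicate point to be justifying that a linear-time MDT algorithm is applicable to symmetric maps (equivalently 2-structures); this is well-documented in the cited literature, so the argument remains short.
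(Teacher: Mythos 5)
Your proof is essentially correct and takes the same route as the paper's: correctness follows from Thm.~\ref{thm:pvr} once you verify that $(\Hext_{n_v},t_v)$ is a legitimate choice in Step~\eqref{step:Gv}, and the running time follows by bounding $\sum_{v\in\mathcal P} n_v^2 \le |X|\sum_{v} n_v = O(|X|^2)$. The one detail the paper includes and you omit is the observation that every prime vertex $v$ has $n_v\ge 3$ children (since with only two children the quotient $\delta_{|L(v)}/\Mmax(\delta_{|L(v)})$ would be complete, not prime), which is what guarantees that $\Hext_{n_v}$ is even defined; you should add this line before invoking Prop.~\ref{prop:ext-hg-median}.
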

\begin{proof}
  By Theorem \ref{thm:pvr}, every pvr graph as constructed in Def.\
  \ref{def:pvr} explains $\delta$. Hence, it remains to show that replacing
  each prime vertex $v$ by $(G_v,t_v) = (\Hext_{n},t_v)$ with
  $n_v=|\child_{T_\delta}(v)|$ yields a valid pvr graph. Observe first that
  every prime vertex $v$ must have at least three children since otherwise,
  $\delta_{|L(v)}/\Mmax(\delta_{|L(v)})$ is complete.  Hence, $\Hext_{n_v}$
  is well-defined since $n_v\geq 3$ for all $v\in \mathcal{P}$.  This and
  Prop.\ \ref{prop:ext-hg-median} implies that $\Hext_{n_v}$ is a
  well-defined median graph for all $v\in \mathcal{P}$. As outlined in the
  proof of Thm.\ \ref{thm:median-explains}, there is a labeling $t_v$ such
  that $(\Hext_{n_v}, t_v)$ explains any map
  $\delta_{|L(v)}/\Mmax(\delta_{|L(v)})$.  Hence, Algorithm \ref{alg:pvr}
  is correct.

  To compute the running time, we first note that by \cite[Thm.\ 7]{HSW:16}
  the MDT $(T_\delta,t_\delta)$ can be computed in $O(|X|^2)$ time.
  Computing the initial graph $(G^*,t^*)$ and the set of prime vertices
  $\mathcal{P}$ can be done in $O(|X|)$ because $T_{\delta}$ is a tree and
  thus has $O(|X|)$ edges and vertices. Set $V\coloneqq V(T_\delta)$.  By
  construction, $\Hext_{n_v}$ has $O(n_v^2)$ edges and vertices.  Each
  prime vertex $v\in \mathcal{P}$ therefore can be replaced by $(G_v,t_v)$
  with $G_v \simeq \Hext_{n_v}$ in $O(n_v^2)$ time. Repeating the latter
  for all prime vertices works thus in
  $O(\sum_{v \in \mathcal{P}} n_v^2) \subseteq O(\sum_{v \in V}n_v^2)
  \subseteq O(|V|\sum_{v \in V}n_v) \subseteq O(|V|\sum_{v \in V}\deg(v))
  \subseteq O(|V||E(T_\delta)|) = O(|V|^2) = O(|X|^2)$ time. The total
  effort is therefore in $O(|X|^2)$.
\end{proof}

\section{Future Directions}

In summary, we have shown that labeled extended hypercubes, extended
half-grids, and pvr graphs can be used to explain a given symmetric map
$\delta$.  Clearly, pvr graphs with leaf set $X$ have usually fewer
vertices than extended half-grids $\Hext_{|X|}$, which in turn are much
smaller than extended hypercubes $\Qext_{|X|}$. This begs the question
under which conditions it is possible to further simplify a rooted median
graph $(G,t)$ that explains a map $\delta$. More precisely, a given labeled
rooted median graph $(G,t)$ with leaf set $X$ is \emph{least-resolved}
w.r.t.\ a given map $\delta\colon \Xirr\to\Upsilon$ if it explains $\delta$
and there is no labeled rooted median graph $(G^*,t^*)$ explaining $\delta$
that can be obtained from $G$ by edge contraction (and the removal of any
multi-edge that may result in the process). Furthermore, let us say that
$(G,t)$ is \emph{minimally-resolved} w.r.t.\ $\delta$, if it explains
$\delta$ and has among all labeled rooted median graphs with leaf set $X$
that explain $\delta$ the fewest number of vertices.

\begin{figure}[t]
  \begin{center}
    \includegraphics[width=0.7\textwidth]{./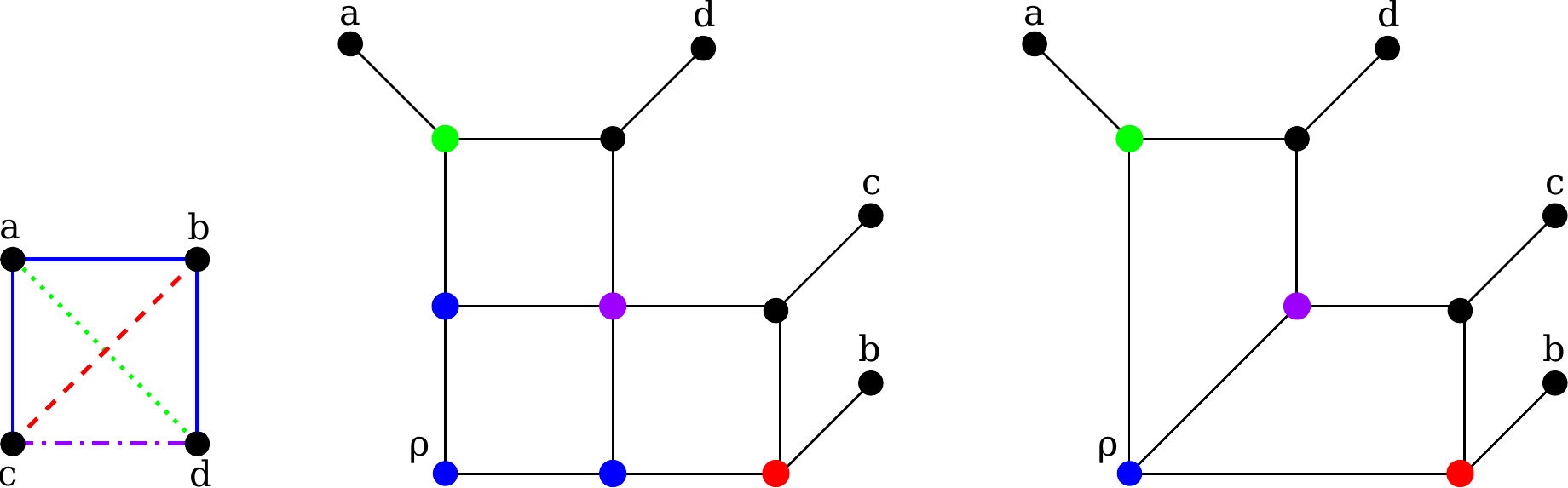}
  \end{center}
  \caption{The map $\delta$ as in Fig.\ \ref{fig:exmpl}(left) explained by
    a labeled halfgrid $(\Hext_{4},t)$ that explains $\delta$ (middle).
    After contraction of the two edges incident to the root and removing
    one of the resulting ``multi''-edges one obtains the graph $(G,t')$ on
    the right. It is easy to verify that $G$ is a grid-graph and, since all
    inner faces are squares, $G$ is a median graph (c.f.\ Thm.\
    \ref{thm:grid-median}). In particular, $(G,t')$ is a least-resolved
    median graph that explains $\delta$.}
  \label{fig:contract}
\end{figure}

\begin{figure}[t]
  \begin{center}
    \includegraphics[width=0.5\textwidth]{./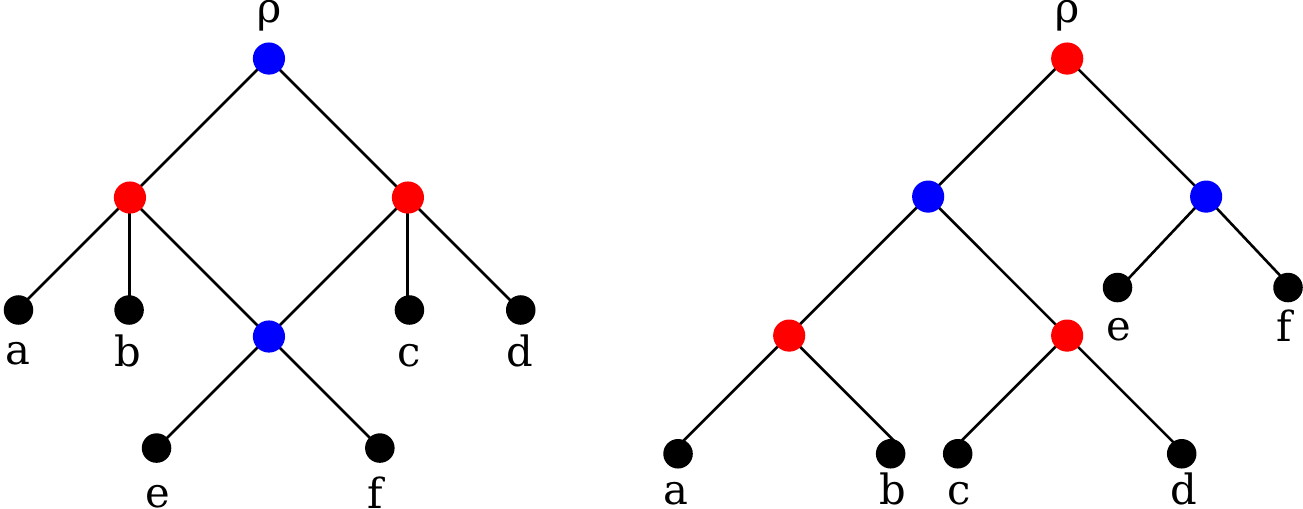}
  \end{center}
  \caption{Both graphs explain the same symmetric map $\delta$.  However,
    the rooted median graph $(G,t)$ (left) has only 10 vertices and 10
    edges while the labeled MDT $(T_\delta,t_\delta)$ has 11 vertices and
    10 edges. Thus, although $(T_\delta,t_\delta)$ is least-resolved
    w.r.t.\ $\delta$ it is not minimally-resolved w.r.t.\ $\delta$.}
  \label{fig:min}
\end{figure}

The graph in Fig.~\ref{fig:contract}(right), for example, is obtained from
the halfgrid in Fig.~\ref{fig:contract}(middle) by contraction of two edges
and removal of one of the resulting multi-edges connecting the root $\rho$
and the purple-colored vertex. The resulting graph is still a median graph
that explains $\delta$.  Hence, the halfgrid is not least-resolved w.r.t.\
the given map $\delta$. The tree $(T_{\delta},t_{\delta})$ in Fig.\
\ref{fig:min} is least-resolved w.r.t.\ $\delta$. It is not minimally
resolved, however, since the median graph $(G,t)$ in Fig.~\ref{fig:min}
explains the same map $\delta$ with fewer vertices.  These example suggest
to characterize least resolved and minimally resolved rooted median graphs
that explain a given map $\delta$. The example in Fig.~\ref{fig:min} begs
the question which symbolic ultrametrics can be explained by median graph
with fewer vertices than the MDT or cotree?

The definition of minimal resolution suggested above above uses $|V(G)|$ to
measure the size of $(G,t)$. It may no be the most natural choice,
however. As an alternative, one might want to consider median graphs with a
minimal number of edges. Are the discriminating cotrees minimal
explanations for symbolic ultrametrics if $|E(G)|$ is used to quantify
size?

A related topic for future work is the use of unrooted instead of rooted
graphs as models of evolutionary relationships. Median networks again
appear as a natural generalization of trees. In fact they are used to
describe population-level variations e.g.\ in \cite{Bandelt:00}. It seems,
however, that in the unrooted setting ternary rather than binary relations
become the natural mathematical objects to encode events and
properties. This line of reasoning has led to investigations into symbolic
ternary metrics and the characterization of a generalization of symbolic
ultrametrics \cite{Gruenewald:18,Huber:19}. In \cite{Huber:18} such
relations are considered in the context of orthology in the setting of
level-1 networks as a generalization of trees.

\paragraph{Acknowledgements.}
Stimulating discussions with Carsten R.\ Seemann are gratefully
  acknowledged. This work was funded in part by the \emph{Deutsche
    Forschungsgemeinschaft}.

\bibliographystyle{plain}
\bibliography{biblio}
\end{document}